\definecolor{refkey}{gray}{.5}   % graylevel for refs
\definecolor{labelkey}{gray}{.5} % graylevel for labels
\theoremstyle{plain}
\newtheorem{theorem}{Theorem}[section]
\newtheorem{lemma}[theorem]{Lemma}
\newtheorem{proposition}[theorem]{Proposition}
\newtheorem{corollary}[theorem]{Corollary}
\theoremstyle{definition}
\newtheorem{definition}[theorem]{Definition}
\newtheorem{remark}[theorem]{Remark}
\numberwithin{equation}{section}
\def\@map#1#2[#3]{\mbox{$#1 \colon #2 \longrightarrow #3$}}
\def\map#1#2{\@ifnextchar [{\@map{#1}{#2}}{\@map{#1}{#2}[#2]}}
\newcommand{\set}[2]{\ensuremath{\left\{#1 \,\colon #2\right\}}}
\newcommand{\abs}[1]{\ensuremath{\left| #1 \right|}}
\newcommand{\R}{\mathbb{R}}
\newcommand{\C}{\mathbb{C}}
\newcommand{\eps}{\varepsilon}
\newcommand{\te}{\vartheta}
\renewcommand{\phi}{\varphi}
\newcommand{\uom}{\underline{\omega}}
\newcommand{\om}{\Omega}
\newcommand{\pnm}{\phi_{n,m}}
\newcommand{\id}{\operatorname{id}}
\title{Random interval homeomorphisms}
\author{Llu\'{\i}s Alsed\`a}
\address{Departament de Matem\`atiques, Edifici Cc, Universitat
Aut\`onoma de Barcelona, 08913 Cerdanyola del Vall\`es, Barcelona,
Spain}
\email{alseda@mat.uab.cat}
\author{Micha\l\ Misiurewicz}
\address{Department of Mathematical Sciences, IUPUI, 402 N. Blackford
    Street, Indianapolis, IN 46202}
\email{mmisiure@math.iupui.edu}
\thanks{The first author has been partially supported by the MEC grant
numbers MTM2008-01486 and MTM2011-26995-C02-01.}
\subjclass[2010]{Primary: 37C55, 37C70}
\keywords{Skew product, random system, attractor}
\date{December 12, 2012}% (that is, 12/12/12); 9 days until TEOTW}
\begin{document}
\begin{abstract}
We investigate homeomorphisms of a compact interval, applied randomly.
We consider this system as a skew product with the two-sided Bernoulli
shift in the base. If on the open interval there is a metric in which
almost all maps are contractions, then (with mild additional
assumptions) there exists a global pullback attractor, which is a
graph of a function from the base to the fiber. It is also a forward
attractor. However, the value of this function depends only on the
past, so when we take the one-sided shift in the base, it disappears.
We illustrate those phenomena on an example, where there are two
piecewise linear homeomorphisms, one moving points to the
right and the other one to the left.
\end{abstract}
\maketitle

\section{Introduction}

In this paper we investigate the properties of the systems of randomly
applied orientation preserving homeomorphisms of the compact interval
$[0,1]$. Such a system can be considered as a skew product with a
mixed topological-measure structure. In the base we do not need any
topology (although sometimes we have it), but we assume that we have
there an ergodic measure preserving transformation of a probability
space. In the fiber, which is an interval, we have orientation
preserving homeomorphisms, depending in a measurable way on the point
in the base.

We are interested in the existence of almost global attractors which
are graphs of measurable functions from the base to the fiber. When we
speak of an attractor, we mean a set towards which almost all orbits
converge, and the convergence is considered fiberwise (only in the
direction of a fiber). This agrees with the philosophy saying that the
phase space is really only the fiber space (here, the interval).

Those systems and their attractors can be looked upon from various
points of view (random systems, Strange Nonchaotic Attractors,
Iterated Function Systems, nonautonomous systems, etc.),
see~\cite{AM}.

Our main result is a detailed description of the behavior of a certain
one-parameter family of piecewise linear random homeomorphisms.
However, we precede it with some general results, which can be applied
to very general random systems of interval homeomorphisms.

Note that 0 and 1 are fixed points of all orientation preserving
homeomorphisms of $[0,1]$, so the products of the base space with
$\{0\}$ and with $\{1\}$ (we will refer to those sets as \emph{level
  $0$} and \emph{level $1$}) are invariant for the skew product. if
they are attracting in the sense of negative fiberwise Lyapunov
exponent, one expects their basins of attraction to have positive
measure. We prove that this is the case in a general situation, under
some mild additional conditions. Our proof uses the same ideas as the
proof by Bonifant and Milnor~\cite{BoMi} in the special case.

In~\cite{BoMi} the cases of attracting levels 0 and 1 (when the fiber
maps have negative Schwarzian derivative) and repelling levels 0 and 1
(when the fiber maps have positive Schwarzian derivative) were
considered separately. Here we join them together and consider an
invertible map in the base. The two opposite types of behavior are
observed depending whether the time goes to $+\infty$ or to $-\infty$.
The common boundary of the basins of attraction of the levels 0 and 1
as the time goes to $-\infty$ is a graph of a measurable function from
the base to the interval, is a forward attractor
(statement~\eqref{ma3} of Theorem~\ref{main}) and a pullback attractor
(statement~\eqref{ma4} of Theorem~\ref{main}) for the system.

In this general theorem one needs an additional assumption that the
maps in the fibers are kind of contractions almost everywhere. Proving
it is crucial in the study of this problem. In~\cite{BoMi} this is achieved by the
assumptions on Schwarzian derivatives of the maps. In our
one-parameter family of maps this requires a careful proof. In fact,
the contraction we get is very weak (although really it may turn out
to be exponential almost everywhere; this is unknown to us).

Finally, we compare the invertible and non-invertible cases. Although
the attractor in the invertible case depends only on the past in the
base, it vanishes when we forget about the past (more precisely, it
becomes the whole space). We call it \emph{the mystery of the
  vanishing attractor}. While we described it already in~\cite{AM},
the system considered here is a much better illustration of this
paradox.

The paper is organized as follows. In Section~\ref{sec-basin} we
generalize the theorem of Bonifant and Milnor. In Section~\ref{time}
we consider a general system with an invertible map in the base and
prove a general theorem about its properties. In
Section~\ref{bernoulli} we prove additional properties of the skew
product under the assumption that the system in the base is a
Bernoulli shift. In Section~\ref{plh} we define our family of
piecewise linear homeomorphisms and prove its contraction properties.
In Section~\ref{sec-measures} we investigate our family of systems
from the point of view of invariant measures. In Section~\ref{2s1s} we
compare the invertible and noninvertible systems.

Let us conclude this section with an observation and some questions.
In the theory of interval maps (not random) negative Schwarzian
derivative often substitutes expansion (see, e.g.,~\cite{devan, Mis}).
The same happens in~\cite{BoMi}, where positive Schwarzian derivative
gives us a form of contraction. However, in our piecewise linear
system we also get a kind of contraction. What is the source of it?
Does it have anything to do with some property resembling negative
Schwarzian derivative? Can it be observed in non-random, say unimodal,
maps?

\section{Boundaries of basins of attraction}\label{sec-basin}

Let us start with a very general situation. Let $\om$ be some space
(later there will be an invariant measure on it), $R:\om\to\om$ a map,
$I=[0,1]$, $G:\om\times I \to \om\times I$ a skew product:
$G(\te,x)=(R(\te),g_\te(x))$, and let $\pi_2$ be the projection from
$\om\times I$ to $I$. We assume that each $g_\te$ is an orientation
preserving homeomorphism of $I$ onto itself.

The question is: if the level 0 set $\om\times\{0\}$ is an attractor,
what can we say about the boundary of the basin of attraction?
It can be defined as follows.

Let $\pnm(\te)$ be the unique number such that
\[
G^n(\te,\pnm(\te))=\left(R^n(\te),\frac1m\right).
\]
This defines the function $\pnm:\om\to I$.

\begin{remark}\label{profinm}
Clearly, $\inf_{n\ge N}\pnm(\te)$ is increasing in $N$ and decreasing
in $m$.
\end{remark}

Then we define a function $\phi:\om\to I$ by
\begin{equation}\label{equ0}
\phi(\te)=\lim_{m\to\infty}\lim_{N\to\infty}\inf_{n\ge N}\pnm(\te).
\end{equation}
By Remark~\ref{profinm} the limits above exist.

Now we study the map $\varphi$ defined above.

\begin{lemma}\label{lem1}
If $x<\phi(\te)$ then
\begin{equation}\label{equ1}
\lim_{n\to\infty}\pi_2(G^n(\te,x))=0.
\end{equation}
If $x>\phi(\te)$ then~\eqref{equ1} does not hold.
\end{lemma}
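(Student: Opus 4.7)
The plan is to use that each $G^n$ restricted to a fibre is an orientation preserving homeomorphism of $I$ (a composition of orientation preserving $g_\te$'s), so $x<\pnm(\te)$ is equivalent to $\pi_2(G^n(\te,x))<1/m$, with the reversed strict inequality behaving analogously. The whole lemma then reduces to translating the definition of $\phi(\te)$ across this monotone equivalence.

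First I would reorganise~\eqref{equ0} using Remark~\ref{profinm}. Since $\inf_{n\ge N}\pnm(\te)$ is increasing in $N$, the inner limit equals $\liminf_{n\to\infty}\pnm(\te)$; and since this quantity is decreasing in $m$, the outer limit is the infimum over $m$. Hence
\[
\phi(\te) \;=\; \inf_{m\ge 1}\,\liminf_{n\to\infty}\pnm(\te),
\]
and in particular $\phi(\te) \le \liminf_{n\to\infty}\pnm(\te)$ for every $m$.

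For the first assertion, I would fix $x<\phi(\te)$ and an arbitrary $m$. Then $x<\phi(\te)\le\liminf_n\pnm(\te)$, so there exists $N$ with $\pnm(\te)>x$ for all $n\ge N$. Monotonicity of the $n$-th fibre map then gives $\pi_2(G^n(\te,x))<1/m$ for all $n\ge N$; since $m$ was arbitrary and $\pi_2(G^n(\te,x))\ge 0$, this produces~\eqref{equ1}. For the second assertion, assuming $x>\phi(\te)$, I would pick $m_0$ with $\liminf_n\phi_{n,m_0}(\te)<x$, which is possible because the liminfs decrease to $\phi(\te)<x$. Then $\phi_{n,m_0}(\te)<x$ for infinitely many $n$, so monotonicity yields $\pi_2(G^n(\te,x))>1/m_0$ for infinitely many $n$, and~\eqref{equ1} fails.

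I do not anticipate any real obstacle: the argument is essentially a bookkeeping exercise once~\eqref{equ0} has been rewritten via Remark~\ref{profinm}, with the orientation preservation of every $g_\te$ (and hence of the $n$-fibre map) doing the real work. The only subtlety is to keep strict inequalities on the correct side of the homeomorphism and to invoke the right monotonicity (in $N$ versus in $m$) at each step.
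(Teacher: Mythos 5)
Your argument is correct and is essentially the same as the paper's: both proofs rest on the equivalence, via orientation preservation, between $x<\pnm(\te)$ and $\pi_2(G^n(\te,x))<1/m$, and then unwind the definition of $\phi$ through the monotonicity in $N$ and $m$ from Remark~\ref{profinm}. You package the quantifier-chasing as $\phi(\te)=\inf_{m}\liminf_{n}\pnm(\te)$ while the paper writes out the nested quantifiers explicitly, but this is purely notational.
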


\begin{proof}
Assume first that $x<\phi(\te)$. Then, by Remark~\ref{profinm},
\[
\forall_m\ \ x<\lim_{N\to\infty}\inf_{n\ge N}\pnm(\te),
\]
so
\[
\forall_m\ \exists_N\ \ x<\inf_{n\ge N}\pnm(\te),
\]
so
\[
\forall_m\ \exists_N\ \forall_{n\ge N}\ \ x<\pnm(\te).
\]
Observe that for every $m$ the inequality $x<\pnm(\te)$ is equivalent
to
\[
\pi_2(G^n(\te,x)))<\frac1m,
\]
and~\eqref{equ1} follows.

Assume now that $x>\phi(\te)$. Then, again by Remark~\ref{profinm},
\[
\exists_m\ \ x>\lim_{N\to\infty}\inf_{n\ge N}\pnm(\te),
\]
so
\[
\exists_m\ \forall_N\ \ x>\inf_{n\ge N}\pnm(\te),
\]
so
\[
\exists_m\ \forall_N\ \exists_{n\ge N}\ \ x>\pnm(\te).
\]
Similarly as above, for every $m$ the inequality $x>\pnm(\te)$ is
equivalent to
\[
\pi_2(G^n(\te,x)))>\frac1m,
\]
so~\eqref{equ1} does not hold.
\end{proof}

\begin{remark}\label{invgr}
It is easy to see that if~\eqref{equ1} holds when $x<\phi(\te)$
and does not hold when $x>\phi(\te)$,
then the graph of $\phi$ is $G$-invariant.
\end{remark}

\begin{lemma}\label{lem2}
For a given $\te\in\om$ assume that there exists $\eta>0$ and
$\lambda_n$ ($n=0,1,2,\dots$) such that
\[
g_{R^n(\te)}(x)\le\lambda_n x
\]
for every $n$ and $x \in (0,\eta),$ and
\[
\limsup_{n\to\infty}\frac1n\sum_{k=0}^{n-1}\log\lambda_k<0.
\]
Then $\phi(\te)>0$.
\end{lemma}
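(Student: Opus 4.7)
The plan is to exhibit, by hand, a positive $x_0$ whose forward orbit $\pi_2(G^n(\te,x_0))$ converges to $0$. Lemma~\ref{lem1} then forbids $x_0 > \phi(\te)$ (its second clause), so $\phi(\te) \ge x_0 > 0$.

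First I would set $S_n := \sum_{k=0}^{n-1}\log\lambda_k$ with the convention $S_0 = 0$. Since $\limsup_{n\to\infty} S_n/n$ is strictly negative, one deduces $S_n \to -\infty$, so the sequence $\{S_n\}_{n\ge 0}$ is bounded above. Let $M := \sup_{n\ge 0} S_n$, which is a finite number in $[0,\infty)$, and fix any $x_0 \in (0,\eta e^{-M})$.

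Next, an induction on $n$ gives that $x_n := \pi_2(G^n(\te,x_0))$ lies in $(0,\eta)$ and satisfies $x_n \le e^{S_n}x_0$. Positivity is automatic because each $g_{R^n(\te)}$ is an orientation-preserving homeomorphism fixing $0$. For the upper bound, once $x_n \in (0,\eta)$ the hypothesis $g_{R^n(\te)}(y)\le\lambda_n y$ applies, yielding $x_{n+1}\le\lambda_n x_n\le e^{S_{n+1}}x_0$, and the estimate $e^{S_{n+1}}x_0 \le e^M x_0 < \eta$ keeps $x_{n+1}$ inside $(0,\eta)$ and closes the induction. Since $S_n \to -\infty$, the bound $x_n \le e^{S_n}x_0$ forces $x_n \to 0$, and Lemma~\ref{lem1} then delivers $\phi(\te) \ge x_0 > 0$.

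The only mild subtlety, and what drives the choice of $x_0$, is that the hypothesis controls only the Ces\`aro averages of $\log\lambda_n$ and not the individual partial sums: some early $\log\lambda_k$ could be large and positive. One therefore cannot start in all of $(0,\eta)$, but only in $(0,\eta e^{-M})$; this shrinking is exactly enough to trap the orbit inside $(0,\eta)$ while the averaged contraction takes over and drives it to $0$.
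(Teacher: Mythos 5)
Your proof is correct and follows essentially the same route as the paper: both arguments observe that the Ces\`aro hypothesis forces the products $\prod_{k=0}^{n-1}\lambda_k = e^{S_n}$ to tend to $0$ and hence to be bounded, shrink the starting point to keep the orbit in $(0,\eta)$ where the hypothesis applies, and then invoke Lemma~\ref{lem1}. The only differences are cosmetic --- you work with partial sums $S_n$ of the logarithms rather than the products, and you make the trapping induction explicit where the paper leaves it implicit.
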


\begin{proof}
Take $r$ such that
\[
-r\in\left(\limsup_{n\to\infty}\frac1n
\sum_{k=0}^{n-1}\log\lambda_k, 0\right).
\]
Then for sufficiently large $n$ we have
\[
\prod_{k=0}^{n-1}\lambda_k=\exp\left(n\cdot\frac1n
\sum_{k=0}^{n-1}\log\lambda_k\right)<e^{-nr}.
\]
Thus, we get
\begin{equation}\label{equ2}
\lim_{n\to\infty}\prod_{k=0}^{n-1}\lambda_k=0,
\end{equation}
so, in particular,
\[
\max_n\left\{\prod_{k=0}^{n-1}\lambda_k\right\}<\infty.
\]

Take any
\[
x_0\in\left(0,\frac\eta{\max\left\{1,\max_n\left\{\prod_{k=0}^{n-1}
\lambda_k\right\}\right\}}\right).
\]
Then we get for all $n$
\[
\pi_2(G^n(\te,x_0))\le\prod_{k=0}^{n-1}\lambda_k\cdot x_0<\eta,
\]
and by~\eqref{equ2} we get~\eqref{equ1} with $x$ replaced by $x_0$.
By Lemma~\ref{lem1} we get $\phi(\te)\ge x_0>0$.
\end{proof}

Let us now assume additionally that $\om$ is equipped with an
$R$-invariant ergodic probability measure $\mu$, the maps $g_\te$
depend on $\te$ in a measurable way and they are all differentiable at
0. Let $\Lambda$ be the \emph{exponent at level 0}, that is,
\[
\Lambda=\int_\om g_\te'(0)\;d\mu(\te).
\]
By the Birkhoff Ergodic Theorem, for almost every $\te$ we have
\begin{equation}\label{equ3}
\lim_{n\to\infty}\frac1n\sum_{k=0}^{n-1}\log g_{R^k(\te)}'(0)=
\Lambda.
\end{equation}

\begin{theorem}\label{basin}
Assume that $\Lambda<0$ and that at least one of the following
assumptions is satisfied:
\begin{enumerate}[(i)]
\item\label{i1} the set $\{g_\te:\te\in\om\}$ is finite,
\item\label{i2} all functions $g_\te$ are concave,
\item\label{i3} all functions $g_\te$ are twice differentiable and
there exists a constant $C$ such that $g_\te''(x)/g_\te'(x)\le C$
for all $\te,x$.
\end{enumerate}
Then there exists a measurable function $\phi:\om\to I$, positive
almost everywhere, such that for every $\te\in\om$~\eqref{equ1} holds
if $x<\phi(\te)$ and does not hold if $x>\phi(\te)$.
\end{theorem}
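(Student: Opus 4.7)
The plan is to observe that, with $\phi$ already defined by~\eqref{equ0}, Lemma~\ref{lem1} already supplies the dichotomy stated in the theorem for every $\te$, so only two things remain to be verified: that $\phi$ is measurable, and that $\phi(\te)>0$ holds $\mu$-almost everywhere. Measurability should come essentially for free. For each fixed $n,m$ the map $(\te,x)\mapsto\pi_2(G^n(\te,x))$ is jointly measurable and, in the fiber variable, strictly increasing and continuous (since $g_\te$ is an orientation-preserving homeomorphism), so the implicitly defined $\pnm$ is measurable in $\te$; the function $\phi$ is then a countable nested double monotone limit of these, and hence measurable.

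For positivity the plan is to verify, on a set of full $\mu$-measure, the hypotheses of Lemma~\ref{lem2}. Birkhoff's theorem gives~\eqref{equ3} on a full-measure set $\om_0$. On $\om_0$ I would take $\lambda_n=(1+\delta)g_{R^n(\te)}'(0)$ with $\delta>0$ small enough that $\Lambda+\log(1+\delta)<0$; then the Cesaro condition of Lemma~\ref{lem2} follows at once from~\eqref{equ3}. The remaining task is to produce a single $\eta>0$, independent of $n$, such that $g_{R^n(\te)}(x)\le\lambda_n x$ for every $x\in(0,\eta)$, and this is where the three separate hypotheses enter.

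Under~\eqref{i2}, concavity together with $g_\te(0)=0$ forces the chord slope $g_\te(x)/x$ to be nonincreasing in $x$, giving $g_\te(x)\le g_\te'(0)x$ on all of $(0,1]$; one may take $\eta=1$ and $\delta=0$. Under~\eqref{i3}, the bound $g_\te''/g_\te'\le C$ reads $(\log g_\te')'\le C$, which after two integrations yields $g_\te(x)\le g_\te'(0)(e^{Cx}-1)/C$; since $(e^{Cx}-1)/(Cx)\to 1$ as $x\to 0$, a single $\eta$ depending only on $C$ and $\delta$ works for every $\te$. Under~\eqref{i1}, differentiability at $0$ of each of the finitely many maps $g$ gives individual $\eta_g>0$ with $g(x)\le(g'(0)+\delta')x$ on $(0,\eta_g)$, and finiteness makes $\eta=\min_g\eta_g>0$ supply the bound uniformly.

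The main obstacle, shared by all three cases, is exactly this passage from pointwise derivative information at the single point $0$ to a neighborhood-uniform bound on $(0,\eta)$ valid along the entire orbit $\{R^n(\te)\}$; the three hypotheses are three distinct mechanisms that supply this uniformity, and each is reasonably quick once recognized. Once the uniform bound is in hand, Lemma~\ref{lem2} gives $\phi(\te)>0$ throughout $\om_0$, which combined with measurability and the dichotomy from Lemma~\ref{lem1} completes the plan.
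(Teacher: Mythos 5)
Your proposal is correct and follows essentially the same route as the paper: define $\phi$ via~\eqref{equ0}, invoke Lemmas~\ref{lem1} and~\ref{lem2}, and reduce to a uniform bound $g_\te(x)\le\lambda_n x$ on a fixed neighborhood $(0,\eta)$ of $0$, which is verified separately under each of the three hypotheses. The only difference is cosmetic: in case~\eqref{i3} you integrate $(\log g_\te')'\le C$ twice to get $g_\te(x)\le g_\te'(0)(e^{Cx}-1)/C$, whereas the paper argues by contradiction with two applications of the Mean Value Theorem, and you write the slack as $(1+\delta)$ where the paper uses $e^{-\Lambda/2}$; both lead to the same conclusion.
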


\begin{proof}
The function $\phi$ is defined by~\eqref{equ0} and it has the desired
properties by Lemmas~\ref{lem1} and~\ref{lem2}, provided the
assumptions of Lemma~\ref{lem2} are satisfied for almost every $\te$.
To show that they are satisfied, it is enough to prove that there
exists $\eta>0$ such that for every $\te$ and $x\in(0,\eta)$
\begin{equation}\label{equ4}
\frac{g_\te(x)}x<e^{-\Lambda/2}g_\te'(0)
\end{equation}
(remember that $e^{-\Lambda/2}>1$). Indeed, then we can take in
Lemma~\ref{lem2}
\[
\lambda_k= e^{-\Lambda/2}f_{R^k(\te)}'(0),
\]
and by~\eqref{equ3} we get for almost every $\te$
\[
\limsup_{n\to\infty}\frac1n\sum_{k=0}^{n-1}\log\lambda_k\le
\lim_{n\to\infty}\frac1n\sum_{k=0}^{n-1}\log g_{R^k(\te)}'(0)
-\frac\Lambda2=\frac\Lambda2<0.
\]

Assume first that~\eqref{i1} is satisfied. If $\{g_\te:\te\in\om\}=
\{h_1,\dots,h_m\}$, then for every $i$, by the definition of the
derivative and since $h_i(0)=0$, there is $\eta_i>0$ such that for all
$x\in(0,\eta_i)$ we have $h_i(x)/x<e^{-\Lambda/2}h_i'(0)$. Now we
take $\eta=\min\{\eta_1,\dots,\eta_m\}$ and then for every
$x\in(0,\eta)$~\eqref{equ4} holds.

Assume now that~\eqref{i2} is satisfied. Then for every $\te$ and $x$ we
have
\[
\frac{g_\te(x)}x\le g_\te'(0)<e^{-\Lambda/2}g_\te'(0)
\]
and we are done.

Assume finally that~\eqref{i3} is satisfied. Set
\[
\eta=\min\left\{1,-\frac\Lambda{2C}\right\}.
\]
Suppose that there are some $\te$ and $x\in(0,\eta)$ for
which~\eqref{equ4} does not hold. Then, by the Mean Value Theorem,
there is $y\in(0,x)$ such that
$g_\te'(y)\ge e^{-\Lambda/2}g_\te'(0)$, that is,
\[
\log g_\te'(y)-\log g_\te'(0)\ge-\frac\Lambda2.
\]
Then there is $z\in(0,y)$ such that
\[
\frac{g_\te''(z)}{g_\te'(z)}=(\log g_\te')'(z)\ge-\frac\Lambda{2y}
>-\frac\Lambda{2\eta}\ge C,
\]
a contradiction with the assumption~\eqref{i3}. This completes the
proof.
\end{proof}

\section{Two directions of time}\label{time}

Let us consider a skew product similar to the one from the preceding
section, under an additional assumption that the map in the base is
invertible. Then we can investigate what happens when the time goes to
$+\infty$ and what happens when it goes to $-\infty$. To be in
agreement with the theory of Strange Nonchaotic Attractors, we will
think of the phenomena from the preceding section as occurring as the
time goes to $-\infty$. Thus, we need new notation.

As before $\om$ is a space with a probability measure $\mu$. Now,
$S:\om\to\om$ is an invertible measurable map (with $S^{-1}$ also
measurable), for which $\mu$ is invariant and ergodic. The map
$F:\om\times I \to \om\times I$ is a skew product, given by
$F(\te,x)=(S(\te),f_\te(x))$, and each $f_\te$ is an orientation
preserving homeomorphism of $I$ onto itself.

We assume that the maps $f_\te$ are differentiable at 0 and 1, and
define
\[
\Lambda_0=\int_\om f_\te'(0)\;d\mu(\te),\qquad
\Lambda_1=\int_\om f_\te'(1)\;d\mu(\te).
\]
If both $\Lambda_0$ and $\Lambda_1$ are positive, then as the time
goes to $-\infty$, the levels 0 and 1 are attracting. In many cases we
can use Theorem~\ref{basin} to conclude that their basins of
attraction are nontrivial. However, there is no guarantee that the
boundaries of those basins coincide. For this we need some kind of
contraction in the fibers as the time goes to $+\infty$. Since the
fiber maps are homeomorphisms, we cannot get contractions on closed
intervals $[0,1]$. However, sometimes there is a kind of contraction
on the open intervals $(0,1)$. One example of such a situation is
given in the paper~\cite{BoMi}. There all maps $f_\te$ have positive
Schwarzian derivative. Later in our paper we give a completely
different example with two piecewise linear maps. However, there is no
standard method of proving forward contraction for homeomorphisms.
Therefore in our general theorem that follows, we make it one of the
assumptions. In particular, we will use the following terminology,
independently whether $S$ is invertible or not.

\begin{definition}\label{esscontr}
The skew product $F:\om\times I\to\om\times I$ is \emph{essentially
  contracting} if for almost all $\te\in\om$ and all $x,y\in(0,1)$,
the distance
\[
|\pi_2(F^n(\te,x))-\pi_2(F^n(\te,y))|
\]
goes to $0$ as $n\to\infty$.
\end{definition}

If $\psi:\omega\to I$ is a measurable function, then we define the
measure $\mu_\psi$, concentrated on the graph of $\psi$, as the
lifting of the measure $\mu$, that is,
\[
\mu_\psi(A)=\mu\{\te\in\om:(\te,\psi(\te))\in A\}.
\]

\begin{theorem}\label{main}
For a skew product $F$ as above, assume that
\begin{enumerate}[(I)]
\item\label{mi1} $\Lambda_0,\Lambda_1>0$,
\item\label{mi2} either the set $\{f_\te:\te\in\om\}$ is finite, or
all $f_\te$ are diffeomorphisms of class $C^2$ with
$|f_\te''|/(f_\te')^2$ bounded uniformly in $\te$ and $x$,
\item\label{mi3} $F$ is essentially contracting.
\end{enumerate}
Then there exists a measurable function $\phi:\om\to (0,1)$ with the
following properties:
\begin{enumerate}[(a)]
\item\label{ma1} for almost every $\te\in\om$, if $x<\phi(\te)$ then
\begin{equation}\label{co0}
\lim_{n\to\infty}\pi_2(F^{-n}(\te,x))=0
\end{equation}
and if $x>\phi(\te)$ then
\begin{equation}\label{co1}
\lim_{n\to\infty}\pi_2(F^{-n}(\te,x))=1,
\end{equation}
\item\label{ma2} the graph of $\phi$ is $F$-invariant,
\item\label{ma3} for almost every $\te\in\om$ and every $x\in(0,1)$,
\[
\lim_{n\to\infty}|\pi_2(F^n(\te,x))-\phi(S^n(\te))|=0,
\]
\item\label{ma4} for almost every $\te\in\om$ and for every compact
set $A\subset(0,1)$ and $\eps>0$ there exists $N$ such that for every
$n\ge N$
\begin{equation}\label{co3}
\pi_2(F^n(\{S^{-n}(\te)\}\times A)\subset (\phi(\te)-\eps,
\phi(\te)+\eps).
\end{equation}
\item\label{ma5} if $\om$ is a metric compact space and $F$ is
continuous, then for almost every $\te\in\om$ and every $x\in(0,1)$,
the measures
\[
\frac1n\sum_{k=0}^{n-1}F^k_*(\delta_{(\te,x)})
\]
converge (as $n\to\infty$) in the weak-$*$ topology to the measure
$\mu_\phi$.
\end{enumerate}
\end{theorem}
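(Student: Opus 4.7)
The plan is to construct $\phi$ by applying Theorem~\ref{basin} twice to the skew product $F^{-1}$, once at each endpoint of $I$, and then to identify the two resulting boundary functions using essential contraction.

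First I verify the hypotheses of Theorem~\ref{basin} for $F^{-1}$ at level~$0$. Its fibre map over $\te$ is $g_\te := f^{-1}_{S^{-1}(\te)}$, with $g'_\te(0) = 1/f'_{S^{-1}(\te)}(0)$, so the exponent at level~$0$ for $F^{-1}$ equals $-\Lambda_0 < 0$ by~\ref{mi1}. Under~\ref{mi2}, either $\{g_\te\}$ is finite, or a direct computation gives $g''_\te(y)/g'_\te(y) = -f''_{S^{-1}(\te)}(g_\te(y))/(f'_{S^{-1}(\te)}(g_\te(y)))^2$, whose absolute value is uniformly bounded, yielding hypothesis~\ref{i3} of Theorem~\ref{basin}. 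That theorem produces a measurable $\phi_0 : \om \to I$, positive a.e., with $\pi_2(F^{-n}(\te, x)) \to 0$ iff $x < \phi_0(\te)$. Conjugating by $J(x) = 1-x$ and using $\Lambda_1 > 0$ similarly produces a measurable $\phi_1 : \om \to I$, strictly less than~$1$ a.e., with $\pi_2(F^{-n}(\te, x)) \to 1$ iff $x > \phi_1(\te)$. Since by Lemma~\ref{lem1} no orbit can converge to both endpoints under $F^{-1}$, one has $\phi_0 \le \phi_1$, and both take values in $(0,1)$ a.e.

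The main obstacle is showing $\phi_0 = \phi_1$ almost everywhere. By Remark~\ref{invgr}, the graphs of $\phi_0$ and $\phi_1$ are $F^{-1}$-invariant, hence $F$-invariant since $F$ is a bijection of $\om \times I$. For a.e.\ $\te$ the points $(\te, \phi_0(\te))$ and $(\te, \phi_1(\te))$ lie in the interior, so essential contraction~\ref{mi3} yields
\[
 \bigl|\pi_2(F^n(\te, \phi_0(\te))) - \pi_2(F^n(\te, \phi_1(\te)))\bigr| \to 0,
\]
which by invariance of the graphs becomes $|\phi_0(S^n(\te)) - \phi_1(S^n(\te))| \to 0$. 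If $\phi_1 - \phi_0$ were strictly positive on a set of positive measure, one could find $\eps > 0$ and $B \subseteq \{\phi_1 - \phi_0 \ge \eps\}$ with $\mu(B) > 0$; Birkhoff's ergodic theorem would then make $S^n(\te)$ visit $B$ with positive frequency for a.e.\ $\te$, contradicting the above. Hence $\phi_0 = \phi_1$ a.e., and after modifying on an $S$-invariant null set I obtain the desired $\phi : \om \to (0,1)$.

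Statements~\ref{ma1} and~\ref{ma2} now follow directly from the constructions of $\phi_0,\phi_1$ and from Remark~\ref{invgr}. For~\ref{ma3} I apply essential contraction to the pair $x, \phi(\te) \in (0,1)$ and use $\pi_2(F^n(\te, \phi(\te))) = \phi(S^n(\te))$ from $F$-invariance of the graph. For~\ref{ma4} I revisit the defining formula~\eqref{equ0}: writing $\phi^0_{n,m}(\te) = \pi_2(F^n(S^{-n}(\te), 1/m))$ and its level-$1$ analogue $\psi_{n,m'}(\te) = \pi_2(F^n(S^{-n}(\te), 1 - 1/m'))$, orientation-preservation of the fibre maps gives
\[
 \phi^0_{n,m}(\te) < \pi_2(F^n(S^{-n}(\te), x)) < \psi_{n,m'}(\te)
\]
for $x \in (1/m, 1 - 1/m')$, while Remark~\ref{profinm} together with the equality $\phi_0 = \phi_1 = \phi$ forces both outer liminf/limsup in $n$ to tend to $\phi(\te)$ as $m, m' \to \infty$; monotonicity of the fibre maps then extends the convergence from a single point to a compact $A \subset (0,1)$. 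Finally,~\ref{ma5} is obtained by combining~\ref{ma3} with the uniform continuity of a continuous test function $\psi$ on the compact space $\om \times I$ to get $|\psi(F^k(\te, x)) - \psi(S^k(\te), \phi(S^k(\te)))| \to 0$, then invoking the Birkhoff ergodic theorem for $\te' \mapsto \psi(\te', \phi(\te'))$ on $(\om, S, \mu)$ and a separability argument to handle all continuous test functions simultaneously.
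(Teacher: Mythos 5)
Your proof is correct and follows essentially the same strategy as the paper: apply Theorem~\ref{basin} to $F^{-1}$ at each endpoint to obtain two boundary functions, then show they coincide almost everywhere using essential contraction together with ergodicity, after which~\eqref{ma1}--\eqref{ma3} and~\eqref{ma5} fall out as in the paper. The only minor difference is in part~\eqref{ma4}, where the paper squeezes $F^{n}(\{S^{-n}(\te)\}\times A)$ using~\eqref{ma1} directly at $\phi(\te)\pm\eps$ and monotonicity, whereas you instead appeal to the monotone limits of the approximating sequences $\phi^0_{n,m}$ and $\psi_{n,m'}$; both are valid and closely related.
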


\begin{proof}
Let us start by proving that the assumptions of Theorem~\ref{basin}
are satisfied for $G=F^{-1}$. Clearly, the exponent for $G$ at level 0
is equal to $-\Lambda_0$, so it is negative. Then, if there are
finitely many fiber maps for $F$, then there are finitely many fiber
maps for $G$, so~\eqref{i1} of Theorem~\ref{basin} is satisfied. If all
$f_\te$ are diffeomorphisms of class $C^2$ with $|f_\te''|/(f_\te')^2$
bounded uniformly in $\te$ and $x$, then to show that~\eqref{i3} of
Theorem~\ref{basin} is satisfied, we just use the formula
\[
\frac{(f^{-1})''(x)}{(f^{-1})'(x)}=\frac{-f''(f^{-1}(x))}
{(f'(f^{-1}(x)))^2}.
\]

Thus, by Theorem~\ref{basin}, there exists a measurable function
$\phi:\om\to (0,1]$, such that for almost every $\te\in\om$, if
$x<\phi(\te)$ then~\eqref{co0} holds. Similarly, there exists a
measurable function $\tilde\phi:\om\to [0,1)$, such that for almost
every $\te\in\om$, if $x>\tilde\phi(\te)$ then~\eqref{co1} holds for
$\phi$ replaced by $\tilde\phi$. Clearly, $\phi\le\tilde\phi$, so both
functions have values in $(0,1)$. By Remark~\ref{invgr}, the graphs of
both functions are $F$-invariant (in particular,~\eqref{ma2} holds).
This means that $\phi(S^n(\te))= \pi_2(F^n(\te,\phi(\te))$.
Thus,~\eqref{ma3} follows from~\eqref{mi3}. Similarly,~\eqref{ma3}
holds with $\phi$ replaced by $\tilde\phi$.

In such a way we get that
\begin{equation}\label{co2}
\lim_{n\to\infty}|\phi(S^n(\te))-\tilde\phi(S^n(\te))|=0
\end{equation}
for almost every $\te$. We want to prove that $\phi=\tilde\phi$ almost
everywhere. If this is not true, then there exists $\eps>0$ and a set
$A\subset\om$ of positive measure such that $|\phi(\te)-
\tilde\phi(\te)|>\eps$ for every $\te\in A$. However, by ergodicity of
$\mu$, the trajectory of almost every point of $\om$ passes through
$A$ infinitely many times, so we get a contradiction with~\eqref{co2}.
Thus, $\phi=\tilde\phi$ almost everywhere, and this completes the
proof of~\eqref{ma1}.

To prove~\eqref{ma4}, observe that there is $\delta>0$ such that
$A\subset (\delta,1-\delta)$. Take $\te$ for which~\eqref{ma1} holds.
Then there is $N$ such that if $n\ge N$ then $\pi_2(F^n(\te,
\max(\phi(\te-\eps),0))<\delta$ and $\pi_2(F^n(\te,
\min(\phi(\te+\eps),1))>1-\delta$. Then~\eqref{co3} holds.

To prove~\eqref{ma5}, take $\te$ for which~\eqref{ma3} holds and such
that $(\te,\phi(\te))$ is generic for $\mu_\phi$. The set of such $\te$
has full measure. If $x\in(0,1)$ then the distance between
$F^n(\te,x)$ and $F^n(\te,\phi(\te))$ goes to 0 as $n\to\infty$, and
therefore~\eqref{ma5} holds.
\end{proof}

Let us finish this section by proving a theorem on invariant measures.
It holds whether $S$ (and therefore, $F$) is invertible or not. Its
proof is basically taken from \cite{BMS}. We assume in it that there
is topology in $\om$ in which $\mu$ is a Borel measure.

\begin{theorem}\label{onemeasure}
Assume that $F$ is an essentially contracting skew product as above.
Then there is at most one ergodic probability measure invariant for $F$
that projects to $\mu$ under $(\pi_2)_*$ and such that the measure of
$\om\times\{0,1\}$ is $0$.
\end{theorem}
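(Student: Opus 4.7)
The plan is to argue by contradiction: assuming two such ergodic measures $\nu_1,\nu_2$, I will couple them over the base via a disintegration, then use essential contraction to force the coupling onto the diagonal, which pins down the fiber measures as point masses.

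First I would disintegrate each $\nu_i$ over $\mu$, writing $\nu_i=\int_\om \delta_\te\otimes\nu_{i,\te}\,d\mu(\te)$, with $\nu_{i,\te}$ a probability measure on $I$. Because $\nu_i(\om\times\{0,1\})=0$, we have $\nu_{i,\te}((0,1))=1$ for $\mu$-a.e.\ $\te$. The $F$-invariance of $\nu_i$ together with the invariance of $\mu$ translates, in standard fashion, into the fiberwise relation $(f_\te)_*\nu_{i,\te}=\nu_{i,S(\te)}$ for $\mu$-a.e.\ $\te$. Now define the fiber product measure on $\om\times I\times I$ by
\[
\Pi=\int_\om \delta_\te\otimes\nu_{1,\te}\otimes\nu_{2,\te}\;d\mu(\te),
\]
and the diagonal skew product $\tilde F(\te,x,y)=(S(\te),f_\te(x),f_\te(y))$. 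A direct computation using the two fiberwise relations above shows that $\tilde F_*\Pi=\Pi$.

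Next I would apply essential contraction to the bounded continuous function $d(\te,x,y)=|x-y|$. By assumption~\eqref{mi3} of the setting (Definition~\ref{esscontr}), for $\mu$-a.e.\ $\te$ and every $x,y\in(0,1)$ the sequence $|\pi_2(F^n(\te,x))-\pi_2(F^n(\te,y))|=d(\tilde F^n(\te,x,y))$ tends to $0$. Since $\Pi$-a.e.\ triple $(\te,x,y)$ satisfies $x,y\in(0,1)$ and lies over a $\mu$-typical $\te$, dominated convergence gives $\int d\circ\tilde F^n\,d\Pi\to 0$. But by $\tilde F$-invariance of $\Pi$, $\int d\circ\tilde F^n\,d\Pi=\int d\,d\Pi$ for every $n$, so $\int d\,d\Pi=0$. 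Hence $\Pi$ is concentrated on the diagonal $\{x=y\}$.

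Finally, for $\mu$-a.e.\ $\te$ the product measure $\nu_{1,\te}\otimes\nu_{2,\te}$ is supported on the diagonal of $I\times I$; an elementary argument (any two points in the supports of the two factors must coincide, otherwise disjoint open boxes would carry positive product mass off the diagonal) shows $\nu_{1,\te}=\nu_{2,\te}=\delta_{\phi(\te)}$ for some point $\phi(\te)\in(0,1)$. Integrating over $\te$ yields $\nu_1=\nu_2$, which contradicts distinctness and completes the proof. The only delicate step is the construction and $\tilde F$-invariance of the coupling $\Pi$; everything else reduces to bounded convergence and a soft support argument. Note that ergodicity is not actually needed for the uniqueness conclusion, only the projection and boundary assumptions.
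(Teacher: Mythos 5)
Your argument is correct and takes a genuinely different route from the paper's. The paper works with generic points: since $\nu_1$ and $\nu_2$ are ergodic and both project to $\mu$, one can find a single $\te$ (in the full-measure set where essential contraction holds) and fiber points $x,y\in(0,1)$ with $(\te,x)$ generic for $\nu_1$ and $(\te,y)$ generic for $\nu_2$; the Birkhoff averages $\frac1n\sum_{k<n}F^k_*\delta_{(\te,x)}$ and $\frac1n\sum_{k<n}F^k_*\delta_{(\te,y)}$ converge weak-$*$ to $\nu_1$ and $\nu_2$, while essential contraction forces these two sequences of empirical measures to be asymptotic, so the limits coincide. Your coupling proof --- forming the relative product $\Pi$ of $\nu_1$ and $\nu_2$ over the base, verifying $\tilde F_*\Pi=\Pi$, and then using essential contraction plus dominated convergence to show $\int|x-y|\,d\Pi=0$, hence $\Pi$ sits on the diagonal --- replaces the generic-orbit step with a purely measure-theoretic one. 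This buys two things you already half-noticed: it nowhere uses ergodicity of $\nu_1,\nu_2$, so it proves uniqueness within the larger class of all $F$-invariant measures projecting to $\mu$ and vanishing on $\om\times\{0,1\}$ (and then ergodicity of the unique such measure is a free corollary via ergodic decomposition); and it shows the fiber measures $\nu_{i,\te}$ are Dirac masses, i.e.\ the measure lives on the graph of a measurable selection. The trade-off is that you invoke Rokhlin disintegration of $\nu_i$ over $\mu$, so you implicitly need $\om$ to be standard Borel, whereas the paper only assumes a topology on $\om$ making $\mu$ Borel plus enough regularity for the weak-$*$ argument; in the Bernoulli-shift settings the paper actually uses, both hypotheses hold and the proofs are interchangeable.
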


\begin{proof}
If there are two such measures, say $\nu_1$ and $\nu_2$, there is
$\te\in\om$ and two points $x,y\in(0,1)$, such that $(\te,x)$ is
generic for $\nu_1$, $(\te,y)$ is generic for $\nu_2$, and
\begin{equation}\label{om}
\lim_{n\to\infty}|\pi_2(F^n(\te,x))-\pi_2(F^n(\te,y))|=0.
\end{equation}
Then in the weak-* topology, the averages of the images of the Dirac
delta measure at $(\te,x)$ converge to $\nu_1$ and the averages of the
images of the Dirac delta measure at $(\te,y)$ converge to $\nu_2$,
and by~\eqref{om} we get $\nu_1=\nu_2$.
\end{proof}

\section{Bernoulli shift in the base}\label{bernoulli}

Let us assume now that $(S,\om,\mu)$ is a Bernoulli shift on a finite
alphabet. We can consider a two-sided shift $(\sigma,\Sigma,\mu)$ or a
one-sided shift $(\sigma_+,\Sigma_+,\mu_+)$. We will write the points
of $\Sigma$ and $\Sigma_+$ as $\uom=(\omega_n)_{n=\infty}^\infty$ or
$\uom=(\omega_n)_{n=0}^\infty$ respectively. We will also assume that
the maps $f_{\uom}$ depend only on $\omega_0$ (so there are only
finitely many of them). The interpretation is that we are choosing
those maps randomly and independently each time.

There is a natural projection $P:\Sigma\to\Sigma_+$. It is a
semiconjugacy and it sends the measure $\mu$ to $\mu_+$.

In this context, let us look closer at the definition of the function
$\phi$, given at the beginning of Section~\ref{sec-basin}.

\begin{lemma}\label{pastphi}
If $\uom=(\omega_n)_{n=-\infty}^\infty$, then $\phi(\uom)$ depends
only on $\omega_n$ with $n<0$.
\end{lemma}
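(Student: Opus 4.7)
My plan is to unwind the construction of $\phi$ directly. Recall that in the proof of Theorem~\ref{main} the function $\phi$ was produced by applying Theorem~\ref{basin} to $G = F^{-1}$, so the role of $R$ in Section~\ref{sec-basin} is played here by $\sigma^{-1}$ and the defining equation for $\pnm$ reads
\[
F^{-n}\bigl(\uom,\pnm(\uom)\bigr) = \bigl(\sigma^{-n}(\uom), 1/m\bigr).
\]

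I would then apply $F^n$ to both sides and expand the skew product to obtain the explicit formula
\[
\pnm(\uom) = f_{\sigma^{-1}(\uom)}\circ f_{\sigma^{-2}(\uom)}\circ\cdots\circ f_{\sigma^{-n}(\uom)}\!\left(\frac{1}{m}\right).
\]
By the standing assumption of Section~\ref{bernoulli}, the map $f_{\uom}$ depends only on the zeroth coordinate $\omega_0$; hence each $f_{\sigma^{-k}(\uom)}$ appearing in the composition depends only on $\omega_{-k}$. Consequently $\pnm(\uom)$ is a function of the negatively indexed coordinates $\omega_{-1},\dots,\omega_{-n}$ alone.

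Finally, passing to the iterated limit $\phi(\uom) = \lim_{m\to\infty}\lim_{N\to\infty}\inf_{n\ge N}\pnm(\uom)$: since each $\pnm(\uom)$ depends only on $(\omega_k)_{k<0}$, so do the infima and both limits, and the lemma follows.

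The only bookkeeping subtlety is keeping straight that the $\phi$ of this lemma comes from the Section~\ref{sec-basin} construction applied to $F^{-1}$ rather than to $F$ itself, so that iteration runs backwards in time; this is precisely what introduces dependence on the past rather than the future. Once this is noted, the statement is immediate from the explicit form of the shift action on coordinates.
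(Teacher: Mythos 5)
Your proof is correct and is essentially the paper's own argument, just written out in more detail: the paper simply states $\phi_{n,m}(\uom)=\pi_2(F^n(\sigma^{-n}(\uom),1/m))$ and observes that this depends only on the negatively-indexed coordinates, whereas you expand the composition explicitly to $f_{\sigma^{-1}(\uom)}\circ\cdots\circ f_{\sigma^{-n}(\uom)}(1/m)$ before drawing the same conclusion. The extra unwinding is harmless and makes the coordinate dependence transparent.
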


\begin{proof}
In our case, we have
\[
\phi_{n,m}(\uom)=\pi_2(F^n(\sigma^{-n},1/m)),
\]
so it depends only on $\omega_n$ with $n<0$. Thus, the same is true
for $\phi(\uom)$.
\end{proof}

Now we can look what what happens when we project the measure
$\mu_\phi$ to the one-sided system.

\begin{theorem}\label{projection}
There exists a probability measure $\nu$ on $(0,1)$ such that
\[
(P\times\id_I)_*(\mu_\phi)=\mu_+\times\nu.
\]
\end{theorem}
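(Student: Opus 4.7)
The plan is to exploit the Bernoulli product structure of $\mu$ together with Lemma~\ref{pastphi}: past and future are independent under $\mu$, and $\phi$ is a function of the past only, while $P$ retains only the future. Combining these two facts should give the desired product factorization almost directly.

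First I would identify $\Sigma$ with the product $\Sigma_- \times \Sigma_+$, where $\Sigma_-$ records the coordinates $(\omega_n)_{n<0}$ and $\Sigma_+$ records $(\omega_n)_{n\ge 0}$. The Bernoulli property gives $\mu = \mu_- \times \mu_+$ for the corresponding Bernoulli measures on the two halves; under this identification $P$ is projection onto the second factor, and $P_*\mu = \mu_+$. By Lemma~\ref{pastphi}, $\phi$ factors as $\phi(\uom) = \bar\phi(\uom_-)$ for a measurable $\bar\phi \colon \Sigma_- \to (0,1)$, and I would define $\nu = \bar\phi_*\mu_-$, i.e.\ the distribution of $\phi$ viewed as a random variable on $(\Sigma,\mu)$.

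Next I would verify the identity directly from the definitions. For any measurable $B \subset \Sigma_+ \times I$,
\[
(P \times \id_I)_*(\mu_\phi)(B) = \mu\bigl\{\uom \in \Sigma : \bigl(P(\uom),\phi(\uom)\bigr) \in B\bigr\}.
\]
Writing $\uom = (\uom_-,\uom_+)$ and applying Fubini to $\mu_- \times \mu_+$ turns the right-hand side into
\[
\int_{\Sigma_+} \mu_-\bigl\{\uom_- : (\uom_+, \bar\phi(\uom_-)) \in B\bigr\} \, d\mu_+(\uom_+)
 = \int_{\Sigma_+} \nu\bigl(\{x : (\uom_+, x) \in B\}\bigr) \, d\mu_+(\uom_+),
\]
which equals $(\mu_+ \times \nu)(B)$ by the definition of the product measure. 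This establishes the identity on a $\pi$-system generating the product $\sigma$-algebra on $\Sigma_+ \times I$, hence on all measurable sets.

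I do not foresee any real obstacle; the argument is essentially Bernoulli independence in convenient packaging. The only points requiring modest care are the measurability of $\bar\phi$ (which follows from the product structure on $\Sigma$ and the measurability of $\phi$) and the fact that $\nu$ is supported in $(0,1)$, which is automatic because $\phi$ takes values there by Theorem~\ref{main}.
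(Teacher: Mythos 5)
Your proposal is correct and follows essentially the same route as the paper: decompose $\Sigma = \Sigma_- \times \Sigma_+$ with $\mu = \mu_- \times \mu_+$, invoke Lemma~\ref{pastphi} to factor $\phi$ through $\Sigma_-$, and take $\nu$ to be the distribution of $\phi$ (the paper writes this as $(\pi_-)_*((\mu_-)_{\phi_-})$, which equals your $\bar\phi_*\mu_-$). The paper verifies the product identity by regrouping factors rather than by an explicit Fubini computation, but that is only a cosmetic difference.
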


\begin{proof}
We can write $\Sigma=\Sigma_-\times\Sigma_+$, with
\[
\uom=(\omega_n)_{n=-\infty}^\infty=(\uom_-,\uom_+)=
((\omega_n)_{n=-\infty}^{-1}, (\omega_n)_{n=0}^\infty),
\]
where $\uom_-\in \Sigma_-$ and $\uom_+\in\Sigma_+$. By
Lemma~\ref{pastphi}, there exists a measurable function
$\phi_-:\Sigma_-\to(0,1)$ such that
\begin{equation}\label{eq-pr}
\phi(\uom)=\phi_-(\uom_-).
\end{equation}
On $\Sigma_-$ there is a product measure $\mu_-$ such that
$\mu=\mu_-\times \mu_+$. We can identify in a natural way
$\Sigma\times I=(\Sigma_-\times\Sigma_+)\times I$ with
$\Sigma_+\times(\Sigma_-\times I)$. Then, by~\eqref{eq-pr}, we have
$\mu_\phi=(\mu_-)_{\phi_-}\times\mu_+$, where $(\mu_-)_{\phi_-}$ is
the measure on $\Sigma_-\times I$ defined similarly as $\mu_\phi$.

Let $\pi_-:\Sigma_-\times I\to I$ be the natural projection. Set
$\nu=(\pi_-)_*((\mu_-)_{\phi_-})$. With our identification, we have
$P\times\id_I=\id_{\Sigma_+}\times\pi_-$. We get
\[
(P\times\id_I)_*(\mu_\phi)=(\id_{\Sigma_+}\times\pi_-)_*
(\mu_+\times(\mu_-)_{\phi_-})=(\id_{\Sigma_+})_*(\mu_+)
\times(\pi_-)_*((\mu_-)_{\phi_-})=\mu_+\times\nu.
\]
\end{proof}

\section{Piecewise linear homeomorphisms}\label{plh}

Now we consider a one-parameter family of random homeomorphisms of an
interval, for which we can prove that the theory from the preceding
sections applies.

The situation will be as in the preceding section. The system in the
base will be the Bernoulli shift with probabilities $(1/2,1/2)$. The
corresponding interval homeomorphisms, $f_0,f_1:I\to I$ will be
piecewise liner with two pieces. Additionally, their graphs will be
symmetric with respect to $(1/2,1/2)$, that is, $f_1(x) = 1 -
f_0(1-x)$. For each map the point at which it is not linear can be
considered as a critical point. As always, the situation is simpler if
there is only one critical value, and by the symmetry, this common
critical value has to be $1/2$. Since our maps are orientation
preserving homeomorphisms, we have $f_0(0) = f_1(0) = 0$ and $f_0(1) =
f_1(1) = 1$.

These conditions determine a one-parameter family of pairs of maps
\begin{align*}
f_0(x) & = \begin{cases}
ax & \text{if $0 \le x \le 1-c$,} \\
1-b(1-x) & \text{if $1-c \le x \le 1$,}
\end{cases} \\
f_1(x) & = \begin{cases}
bx & \text{if $0 \le x \le c$,} \\
1-a(1-x) & \text{if $c \le x \le 1$.}
\end{cases}
\end{align*}
where $a=\frac1{2(1-c)}$, $b=\frac1{2c}$, and $0<c<1/2$ (see
Figure~\ref{mapsf0f1}). Observe that the harmonic mean of the slopes
$a$ and $b$ is 1, and that $0<a<1<b$.
\begin{figure}[ht]
\includegraphics{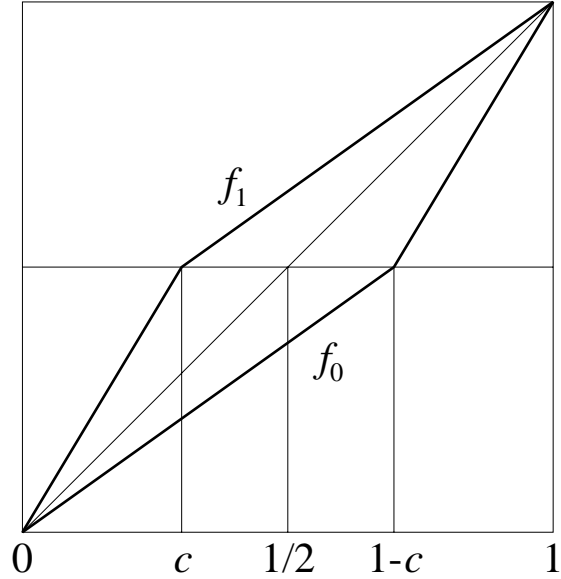}
\caption{The maps $f_0$ and $f_1$}\label{mapsf0f1}
\end{figure}

We will apply $f_j$, $j=0,1$, when the 0-th coordinate of
$\uom\in\Sigma$ (or in $\Sigma_+$) is $j$. That is, we consider skew
products $F:\Sigma\times I\to \Sigma\times I$ given by $F(\uom,x) =
(\sigma(\uom), f_{\omega_0}(x)),$ where $\uom =
(\omega_n)_{n=-\infty}^\infty$, and $F_+:\Sigma_+\times I\to
\Sigma_+\times I$ given by $F_+(\uom,x) = (\sigma_+(\uom),
f_{\omega_0}(x)),$ where $\uom = (\omega_n)_{n=0}^\infty$.

We want to apply Theorem~\ref{main}. Therefore we need to check that
its assumptions are satisfied by $F$. Assumption~\eqref{mi1} is
satisfied because $ab=\frac1{4c(1-c)}>1$. Assumption~\eqref{mi2} is
satisfied because there are only 2 maps $f_\te$. Thus, we have to
prove that $F$ is essentially contracting. As we mentioned earlier,
this is a nontrivial thing to do.

The main idea is to find a homeomorphism from $(0,1)$ to $\R$ such
that in the new metric in $(0,1)$, which we get by transporting back
the natural metric from $\R$, both maps $f_0$ and $f_1$ are
contractions. In fact, they will be very weak contractions (on the
most of the space they will be isometries), so we need more work in
order to prove that $F$ is essentially contracting.

Let $h:(0,1)\to\R$ be a homeomorphism given by the formula
\[
h(x)=\begin{cases}
\log x-\log\tfrac12 & \text{if $x\le\frac12$,}\\
\log\tfrac12-\log(1-x) & \text{if $x>\frac12$.}
\end{cases}
\]
Then we use the metric $d(x,y)=|h(x)-h(y)|$. We can rewrite it as
\[
d(x,y) = \begin{cases}
|\log(x) - \log(y)| & \text{if $x,y \in (0, 1/2]$}\\
|\log(1-x) - \log(1-y)| & \text{if $x,y \in [1/2, 1)$}
\end{cases}
\]
and $d(x,y) = d(x,1/2) + d(y,1/2)$ in any other case. Clearly, $d$ is
a metric in $(0,1),$ equivalent to the Euclidean one.

\begin{remark}\label{equaltriang}
If $x \le y \le z$ then $d(x,z) = d(z,y) + d(y,z)$.
\end{remark}

Now we start the study of the contraction of $F$.

\begin{lemma}\label{deriv}
Assume that $1/2\le x<y<1$. Then
\begin{equation}\label{est1}
\frac{\log y-\log x}{\log(1-x)-\log(1-y)}\le\frac{4-2y}3.
\end{equation}
\end{lemma}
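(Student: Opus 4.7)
The plan is to clear denominators and prove the equivalent form
\[
F(x) := (4-2y)\bigl(\log(1-x)-\log(1-y)\bigr) - 3\bigl(\log y - \log x\bigr) \ge 0
\]
for $x \in [1/2, y]$, which is exactly \eqref{est1} after dividing by the positive quantity $\log(1-x)-\log(1-y)$. Trivially $F(y) = 0$. A direct differentiation gives
\[
F'(x) = \frac{3 - x(7-2y)}{x(1-x)},
\]
so $F$ has a unique critical point $x^\ast = 3/(7-2y)$, where it switches from increasing to decreasing. The elementary inequality $(2y-1)(y-3) \le 0$ for $y\in[1/2,1]$ rearranges to $x^\ast \le y$, and likewise $x^\ast \ge 1/2$. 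Hence on $[1/2, y]$ the function $F$ attains its minimum at one of the two endpoints, and since $F(y)=0$ the task reduces to verifying $F(1/2) \ge 0$.

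Next I would substitute $q = 2y - 1 \in [0,1)$ and rewrite
\[
F(1/2) = -(3-q)\log(1-q) - 3\log(1+q) =: G(q).
\]
Then $G(0) = 0$, and differentiating yields
\[
G'(q) = \log(1-q) + \frac{3-q}{1-q} - \frac{3}{1+q}, \qquad G'(0) = 0.
\]
The key observation is that $G''$ simplifies to a manifestly positive expression:
\[
G''(q) = \frac{1+q}{(1-q)^2} + \frac{3}{(1+q)^2} > 0.
\]
Thus $G'$ is strictly increasing on $[0,1)$ with $G'(0) = 0$, so $G' \ge 0$ and $G$ is nondecreasing with $G(0)=0$, which gives $G(q) \ge 0$.

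The main obstacle is really this endpoint estimate $G(q) \ge 0$: on its face it is a transcendental inequality comparing two weighted logarithms, but it collapses once one notices that after combining $-1/(1-q)$ with $2/(1-q)^2$ the second derivative $G''$ becomes a clean sum of two positive rational terms. Everything else — the reduction to a single endpoint via the critical-point analysis of $F$, and the placement of $x^\ast$ between $1/2$ and $y$ — is routine calculus that falls out of the factorization $(2y-1)(y-3)\le 0$ on $[1/2,1]$.
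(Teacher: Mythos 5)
Your proof is correct. The paper follows the same broad outline — reduce to the endpoint $x=1/2$, then verify a one-variable inequality in $y$ — but by different mechanisms at both steps. For the reduction, the paper observes directly that the left-hand side of~\eqref{est1} is a decreasing function of $x$: writing it as a quotient of the secant slopes $\frac{\log y - \log x}{y-x}$ and $\frac{\log(1-x)-\log(1-y)}{(1-x)-(1-y)}$, concavity of $\log$ makes the former increase and the latter decrease as $x$ moves toward $1/2$, so the ratio is largest at $x=1/2$. You instead clear denominators and do a critical-point analysis of $F$, which forces you to verify that the critical point $3/(7-2y)$ lies in $[1/2,y]$ (via the factorization $(2y-1)(y-3)\le 0$); that is a slightly longer route to the same endpoint but perfectly valid. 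For the endpoint inequality, the paper uses the explicit bounds $\log(1+t)\le t$ and $-\log(1-t)\ge t+\tfrac{t^2}{2}$ to get $\frac{\log(1+t)}{-\log(1-t)}\le\frac{2}{2+t}\le\frac{3-t}{3}$, a short chain with a sharper intermediate estimate, whereas you prove the equivalent $G(q)\ge 0$ by computing $G(0)=G'(0)=0$ and $G''>0$. Both are elementary and correct; the paper's version is tighter, while your convexity argument is more mechanical and would adapt more easily if the target bound $\frac{4-2y}{3}$ were replaced by something less hand-tailored.
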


\begin{proof}
We have
\[
\frac{\log y-\log x}{\log(1-x)-\log(1-y)}=
\frac{\frac{\log y-\log x}{y-x}}{\frac{\log(1-x)-\log(1-y)}
{(1-x)-(1-y)}}.
\]
Since the logarithmic function is concave, the numerator of the
right-hand side above is a decreasing function of $x$, while the
denominator is a decreasing function of $1-x$, that is, an increasing
function of $x$. Therefore, the whole fraction is a decreasing
function of $x$. Thus,
\begin{equation}\label{est2}
\frac{\log y-\log x}{\log(1-x)-\log(1-y)}\le
\frac{\log y-\log\frac12}{\log\frac12-\log(1-y)}=
\frac{\log2y}{-\log2(1-y)}.
\end{equation}

Assume that $0\le t<1$. We use two well-known estimates of the
logarithm, namely
\[
\log(1+t)\le t \qquad\text{and}\qquad -\log(1-t)\ge t+\frac{t^2}2.
\]
{}From those inequalities we get
\begin{equation}\label{est3}
\frac{\log(1+t)}{-\log(1-t)}\le\frac{t}{t+\frac{t^2}2}=
\frac2{2+t}.
\end{equation}
We claim that
\begin{equation}\label{est4}
\frac2{2+t}\le\frac{3-t}3.
\end{equation}
Indeed, this is equivalent to $6\le 6-2t+3t-t^2$, that is, to
$t(1-t)\ge 0$, which is true under our assumptions. From~\eqref{est3}
and~\eqref{est4} we get
\[
\frac{\log(1+t)}{-\log(1-t)}\le\frac{3-t}3.
\]
Applying this inequality to $t=2y-1$, we get
\[
\frac{\log2y}{-\log2(1-y)}\le\frac{4-2y}3.
\]
Together with~\eqref{est2}, we obtain~\eqref{est1}.
\end{proof}

\begin{lemma}\label{f-contr}
If either $x,y \in (0,1/2]$ or $x,y \in [1-c,1)$ then
$d\left(f_0(x),f_0(y)\right) = d(x,y)$. If $x,y \in [1/2, 1-c]$ then
\begin{equation}\label{est5}
d\left(f_0(x),f_0(y)\right)\le\left(1-\frac{2c}3 d(x,y)\right) d(x,y).
\end{equation}
If $x,y \in (0, c]$ or $x,y \in [1/2, 1)$ then
$d\left(f_1(x),f_1(y)\right) = d(x,y)$. If $x,y \in [c, 1/2]$
then~\eqref{est5} holds with $f_1$ instead of $f_0$.
\end{lemma}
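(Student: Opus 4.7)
The plan is to handle $f_0$ directly and deduce the $f_1$ statements from the symmetry $f_1(x)=1-f_0(1-x)$ together with the invariance $d(x,y)=d(1-x,1-y)$ of the metric. The metric $d$ is designed so that each linear branch of $f_0$ acts as a translation in the appropriate logarithmic coordinate: on $[0,1-c]$ the map $x\mapsto ax$ becomes $\log x\mapsto \log x+\log a$, and on $[1-c,1]$ it sends $\log(1-x)$ to $\log(1-x)-\log b$. Whenever a pair $(x,y)$ lies entirely in one of these branches \emph{and} $(f_0(x),f_0(y))$ stays on the same side of $1/2$ --- which is exactly what happens in the two isometry situations of the lemma --- the identity $d(f_0(x),f_0(y))=d(x,y)$ is immediate once one checks, from $c<1/2$, that $a<1$ (so $ax,ay\le a/2\le1/2$) and that $bc=1/2$ (so $b(1-x),b(1-y)\le1/2$).

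The real content is the case $x,y\in[1/2,1-c]$, in which I may assume $x<y$. Both points sit on the right branch of $d$, so $d(x,y)=\log((1-x)/(1-y))$, while $f_0(x)=ax$ and $f_0(y)=ay$ lie in $(0,1/2]$ because $f_0(1-c)=1/2$, hence $d(f_0(x),f_0(y))=\log(y/x)$. Lemma~\ref{deriv} applied to this pair gives
\[
d(f_0(x),f_0(y))\le\frac{4-2y}{3}\,d(x,y),
\]
and the target inequality~\eqref{est5} then reduces algebraically to the scalar estimate $c\,d(x,y)\le y-\tfrac12$.

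This scalar estimate is the only non-trivial step, and the hypothesis $y\le 1-c$ must enter in an essential way. Since $x\ge 1/2$ gives $1-x\le 1/2$, one has $d(x,y)\le \log(1/(2(1-y)))$, so it is enough to verify
\[
c\log\frac{1}{2(1-y)}\le y-\tfrac12\qquad\text{for }y\in[1/2,1-c].
\]
Setting $t=2y-1\in[0,1-2c]$ turns this into $-2c\log(1-t)\le t$, and I would close the argument with the elementary inequality $-\log(1-t)\le t/(1-t)$ (standard from convexity) together with $1-t\ge 2c$, which comes precisely from the bound $y\le 1-c$. The three claims for $f_1$ then follow by applying the involution $x\mapsto 1-x$, which is a $d$-isometry and interchanges the three pairs of intervals $(0,c]\leftrightarrow[1-c,1)$, $[1/2,1)\leftrightarrow(0,1/2]$, and $[c,1/2]\leftrightarrow[1/2,1-c]$. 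The main obstacle is producing the sharp logarithmic estimate above; everything else is case-tracking in the piecewise definition of $d$.
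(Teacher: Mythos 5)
Your proof is correct, and its overall structure matches the paper's: you verify the two isometry regimes by checking that each linear branch of $f_0$ is a translation in the $h$-coordinate and that the image pair stays on the same side of $\tfrac12$ (via $a<1$ and $bc=\tfrac12$), you invoke Lemma~\ref{deriv} for $x,y\in[1/2,1-c]$, you reduce~\eqref{est5} to the scalar inequality $c\,d(x,y)\le y-\tfrac12$, and you transfer everything to $f_1$ by the $d$-isometry $x\mapsto 1-x$. The one place you diverge from the paper is the proof of that scalar estimate. You first push $x$ to its extremal value $\tfrac12$ (so $d(x,y)\le\log\frac{1}{2(1-y)}$), substitute $t=2y-1$, and then combine $-\log(1-t)\le t/(1-t)$ with $1-t\ge 2c$ (which is where $y\le 1-c$ enters). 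The paper instead notes that $\log$ is Lipschitz with constant $1/c$ on $[c,1/2]$, the interval containing $1-x$ and $1-y$, giving $c\,d(x,y)=c\bigl(\log(1-x)-\log(1-y)\bigr)\le y-x\le y-\tfrac12$ in one line; here $y\le 1-c$ enters via the Lipschitz constant. Both arguments are sound; the paper's is a bit shorter and avoids the change of variables, while yours makes the role of the extremal configuration $x=\tfrac12$ and the convexity of $-\log(1-\cdot)$ explicit.
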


\begin{proof}
We will only prove the statements for $f_0$.
The statements for $f_1$ follow in a similar way (or one can use
symmetry).

If $x,y \in (0,1/2],$ then
\[
d\left(f_0(x),f_0(y)\right) = \abs{\log(ax)-\log(ay)} =
  \abs{\log(x)-\log(y)} = d(x,y).
\]
When $x,y \in [1-c,1)$ we also obtain
$d\left(f_0(x),f_0(y)\right) = d(x,y)$ in a similar way.

Now assume that
$x,y \in [1/2, 1-c]$ and $x < y.$ Then
\[
d(x,y) = \log(1-x)-\log(1-y)
\qquad\text{and}\qquad
d\left(f_0(x),f_0(y)\right) = \log y-\log x.
\]
Thus, by Lemma~\ref{deriv},
\begin{equation}\label{est6}
d\left(f_0(x),f_0(y)\right) \le \frac{4-2y}3 d(x,y).
\end{equation}

On the interval $[c,1/2]$ the logarithmic function is Lipschitz
continuous with the constant $1/c$. Therefore
\[
cd(x,y)=c\big(\log(1-x)-\log(1-y)\big)\le y-x\le y-\frac12,
\]
so
\[
\frac{4-2y}3=1-\frac{2y-1}3=1-\frac23\left(y-\frac12\right)
\le 1-\frac{2c}3 d(x,y).
\]
{}From this and~\eqref{est6} we get~\eqref{est5}.
\end{proof}

Fix $\uom \in \Sigma$. For $x_0 \in [0,1]$ we will write
$x_n=\pi_2(F^n(\uom,x_0))$. Set
\[
\Gamma=\left\{\uom\in \Sigma:\lim_{n\to\infty}\#\big\{
k\in\{0,1,\dots,n-1\}:\omega_k = 0\big\} = \frac12\right\}.
\]
By the Birkhoff Ergodic Theorem, $\mu(\Gamma) = 1$.

In what follows, given $\uom \in \Gamma$ and $x_0 \in (0,1)$,
for $n \ge 1$ we define $x_n := f_{\omega_{n-1}}(x_{n-1})$.
Observe that
$F^n(\uom, x_0) = F(S^{n-1}(\uom), x_{n-1}) = (S^n(\uom), x_n).$

\begin{lemma}\label{InfVisits}
Let $\uom \in \Gamma$ and $x_0 \in (0,1)$. Then there are infinitely
many values of $n$ such that $x_n \in (0,1/2]$ and infinitely many
values of $n$ such that $x_n \in [1/2, 1).$
\end{lemma}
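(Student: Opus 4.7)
The plan is to argue by contradiction, exploiting that if the orbit were to stay eventually on one side of $1/2$, then the dynamics on that side would reduce to iterated multiplication by the two slopes $a$ and $b$, whose product $ab = 1/(4c(1-c)) > 1$ forces the orbit to drift toward the far endpoint—contradicting the very confinement we assumed.

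Suppose, for contradiction, that there is $N$ with $x_n > 1/2$ for all $n \ge N$, and set $y_n = 1 - x_n \in (0, 1/2)$. For each $n > N$ I carry out the following branch analysis. If $\omega_{n-1} = 0$, then since $f_0$ sends $[0, 1-c]$ into $[0, 1/2]$, the assumption $x_n > 1/2$ forces $x_{n-1} \in [1-c, 1]$, so the upper branch $1 - b(1-x)$ of $f_0$ is used and $y_n = b\,y_{n-1}$. If $\omega_{n-1} = 1$, then $x_{n-1} > 1/2 > c$ lies on the upper branch of $f_1$, giving $y_n = a\,y_{n-1}$. Iterating from $n=N$ yields
\[
y_n = y_N \cdot a^{M_1(n)}\, b^{M_0(n)},
\]
where $M_j(n)$ counts the occurrences of $j$ in $\omega_N, \ldots, \omega_{n-1}$.

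Because $\uom \in \Gamma$ and shifting by the fixed finite amount $N$ does not alter asymptotic densities, $M_0(n)/n \to 1/2$ and $M_1(n)/n \to 1/2$, whence
\[
\frac{1}{n} \log y_n \;\longrightarrow\; \tfrac{1}{2}\bigl(\log a + \log b\bigr) = \tfrac{1}{2} \log(ab) > 0,
\]
using $ab = 1/(4c(1-c)) > 1$ for $c \ne 1/2$. Therefore $y_n \to \infty$, contradicting $y_n < 1/2$. The second assertion, that infinitely many $x_n$ lie in $[1/2, 1)$, follows by the mirror argument applied directly to $x_n$ (or by invoking the symmetry $f_1(x) = 1 - f_0(1-x)$).

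I expect the only real subtlety to be the branch identification: one has to observe that confinement to $(1/2, 1)$ rules out the lower (contracting) branch of $f_0$ at every step where $\omega_{n-1} = 0$, thereby forcing the expansion factor $b$. Once that is pinned down, the conclusion is an elementary Birkhoff-average computation based on $ab > 1$.
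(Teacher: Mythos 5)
Your proof is correct and follows essentially the same approach as the paper's: confine the orbit to one half of the interval by assumption, observe that the dynamics there reduce to multiplication by $a$ or $b$, and use $\uom\in\Gamma$ together with $ab>1$ to force the orbit out, yielding a contradiction. The only cosmetic differences are that the paper confines to $(0,1/2)$ while you confine to $(1/2,1)$ and reflect via $y_n=1-x_n$, and the paper uses an explicit $\eps$-bound rather than your Birkhoff-limit computation — these are equivalent; both proofs establish one assertion directly and rely on the symmetry $f_1(x)=1-f_0(1-x)$ for the other.
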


\begin{proof}
Suppose that there are only finitely many $n$'s such that
$x_n \in [1/2, 1).$
Without loss of generality we may assume that there are no such $n$'s.
Then
$\omega_n = 0$ implies $x_{n+1} = a x_n$ and
$\omega_n = 1$ implies $x_{n+1} = b x_n.$
Take $\eps > 0$ such that
\[
 \eps < \frac{\log(ab)}{2\log\left(\frac{b}{a}\right)}.
\]
Then, $a^{\tfrac{1}{2} + \eps} b^{\tfrac{1}{2} - \eps} > 1$.
Since $\uom \in \Gamma,$ if $n$ is large enough,
\[
 \#\set{k<n}{\omega_n=0} < \left( \frac{1}{2} + \eps \right)n.
\]
Consequently,
\[
 x_n \ge
  a^{\left(\tfrac{1}{2} + \eps\right)n}
  b^{\left(\tfrac{1}{2} - \eps\right)n}
  x_0 = \left(
     a^{\tfrac{1}{2} + \eps}
     b^{\tfrac{1}{2} - \eps}
  \right)^n x_0
\]
and this last expression tends to $\infty$ as $n$ tends to $\infty$;
a contradiction.
\end{proof}

\begin{lemma}\label{d-contr}
For every $x,y \in (0,1)$ we have
\[
d(f_0(x),f_0(y)) \le d(x,y) \qquad\text{and}\qquad
d(f_1(x),f_1(y)) \le d(x,y).
\]
\end{lemma}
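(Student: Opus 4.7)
The plan is to reduce the lemma to Lemma~\ref{f-contr} by inserting the breakpoints $1/2$ and $1-c$ of $f_0$ (and $c$, $1/2$ of $f_1$) as intermediate points between $x$ and $y$, then invoking the triangle inequality together with the tree-additivity of $d$ along monotone chains (an iteration of Remark~\ref{equaltriang}). Since the involution $\iota(x)=1-x$ is a $d$-isometry (immediate by checking the three cases in the definition of $d$) and $f_1=\iota\circ f_0\circ\iota$, it is enough to treat $f_0$.

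The three intervals $J_1=(0,1/2]$, $J_2=[1/2,1-c]$, $J_3=[1-c,1)$ cover $(0,1)$, overlapping only at the breakpoints $1/2$ and $1-c$, and on each of them Lemma~\ref{f-contr} asserts that $f_0$ is $1$-Lipschitz with respect to $d$ (an isometry on $J_1$ and $J_3$, a strict contraction on $J_2$). Moreover, since the defining function $h$ is strictly increasing on $(0,1)$, any monotone chain $z_0<z_1<\dots<z_k$ in $(0,1)$ satisfies
\[
d(z_0,z_k)=\sum_{i=0}^{k-1}d(z_i,z_{i+1}),
\]
which is a straightforward iteration of Remark~\ref{equaltriang}.

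To conclude, assume without loss of generality that $x<y$, enumerate the elements of $\{1/2,1-c\}\cap(x,y)$ in increasing order as $z_1<\dots<z_{k-1}$, and set $z_0=x$, $z_k=y$. By construction every consecutive pair $\{z_i,z_{i+1}\}$ sits inside one of $J_1,J_2,J_3$, so Lemma~\ref{f-contr} applies to each. Combining with the triangle inequality and the chain identity above gives
\[
d(f_0(x),f_0(y))\le\sum_{i=0}^{k-1}d(f_0(z_i),f_0(z_{i+1}))\le\sum_{i=0}^{k-1}d(z_i,z_{i+1})=d(x,y),
\]
as required.

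I do not anticipate a real obstacle: the genuine Lipschitz estimates have already been carried out piecewise in Lemma~\ref{f-contr}, and the only remaining ingredient is the tree-additivity of $d$ along increasing chains, which is immediate from $d(u,v)=|h(u)-h(v)|$ with $h$ monotone on $(0,1)$. The only minor care needed is to dispatch $x=y$ and $x>y$ by the symmetry of $d$ in its two arguments, and to handle $f_1$ by the involution argument above.
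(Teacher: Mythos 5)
Your argument is correct and is essentially the same as the paper's: split $[x,y]$ at the breakpoints $1/2$ and $1-c$, apply Lemma~\ref{f-contr} on each piece, and sum using the additivity of $d$ along monotone chains (Remark~\ref{equaltriang}). The paper handles $f_1$ by noting the proof is symmetric, which is the same as your involution argument.
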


\begin{proof}
If both $x,y$ are in one of the intervals $(0,1/2]$ or $[1/2, 1-c],$
or $[1-c, 1),$ then by Lemma~\ref{f-contr} $d(f_0(x),f_0(y)) \le
d(x,y)$. Otherwise, we divide the interval between $x$ and $y$ into
two or three subintervals as above and use Remark~\ref{equaltriang}.

For $f_1$ the proof is similar.
\end{proof}

\begin{lemma}\label{nohalf}
There exists $\eta>0$ such that if $x\le 1/2\le y$ and $d(x,y)<\eta$
then $f_0(x)<f_0(y)<1/2$ and $1/2<f_1(x)<f_1(y)$.
\end{lemma}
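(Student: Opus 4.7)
The plan is to choose $\eta>0$ small enough that, under the hypothesis $x\le 1/2\le y$ with $d(x,y)<\eta$, the point $y$ is trapped strictly to the left of the break point $1-c$ of $f_0$, and symmetrically $x$ is strictly to the right of the break point $c$ of $f_1$. Once this confinement is established, both $f_0(y)<1/2$ and $f_1(x)>1/2$ drop out of the defining formulas as a direct arithmetic check, and the remaining inequalities $f_0(x)\le f_0(y)$ and $f_1(x)\le f_1(y)$ are automatic because $f_0,f_1$ are orientation preserving homeomorphisms.

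First I would apply Remark~\ref{equaltriang} to the ordering $x\le 1/2\le y$ to split
\[
d(x,y)=d(x,1/2)+d(1/2,y),
\]
which immediately bounds each summand by $\eta$. Next I would unfold the definition of $d$ on each side of $1/2$: for $y\in[1/2,1)$ we have $d(y,1/2)=-\log\bigl(2(1-y)\bigr)$, so the inequality $d(y,1/2)<\log(1/(2c))=\log b$ is equivalent to $y<1-c$; symmetrically, for $x\in(0,1/2]$ we have $d(x,1/2)=-\log(2x)$, and $d(x,1/2)<\log b$ is equivalent to $x>c$.

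Putting these together, the choice $\eta:=\log b>0$ forces $y<1-c$ and $x>c$ simultaneously. Then the formula for $f_0$ on $[0,1-c]$ gives
\[
f_0(y)=ay<a(1-c)=\tfrac12,
\]
and the formula for $f_1$ on $[c,1]$ gives $f_1(x)=bx>bc=1/2$. Monotonicity of $f_0$ and $f_1$, together with $x\le y$, yields $f_0(x)\le f_0(y)$ and $f_1(x)\le f_1(y)$, strict whenever $x\ne y$. Combined with $f_0(x)\le f_0(1/2)=a/2<1/2$ and $f_1(y)\ge f_1(1/2)=1-a/2>1/2$, this gives the full chain of inequalities in the statement.

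There is no real obstacle here: the entire content of the lemma is a quantitative unfolding of the definitions of the metric $d$ and of the piecewise linear maps $f_0,f_1$. The only minor point to be careful about is that the \emph{strict} inequality $d(y,1/2)\le d(x,y)<\log b$ is what yields the \emph{strict} inequality $y<1-c$ (hence $f_0(y)<1/2$ rather than $f_0(y)\le 1/2$), and analogously on the other side.
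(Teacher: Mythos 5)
Your proof is correct and is essentially an explicit, quantitative unfolding of the paper's one-line argument: the paper simply observes that $f_0(1/2)<1/2<f_1(1/2)$ and invokes continuity, while you compute the sharp threshold $\eta=\log b$ by splitting $d(x,y)=d(x,1/2)+d(1/2,y)$ (via Remark~\ref{equaltriang}) and translating the metric bound into $x>c$, $y<1-c$. Both arguments are the same in substance; yours has the small advantage of producing an explicit $\eta$, and you are right to flag that the strict inequalities $f_0(x)<f_0(y)$, $f_1(x)<f_1(y)$ really require $x\neq y$ (a harmless omission in the lemma's statement, since it is only ever applied with $x_0<y_0$).
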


\begin{proof}
This follows immediately from the inequality $f_0(1/2)<1/2<f_1(1/2)$
and continuity of $f_0$ and $f_1$.
\end{proof}

\begin{lemma}\label{d-a-n-contr}
Let $1/2\le x_0 < y_0$ and $x_n < y_n\le 1/2$ for some $n \ge 1.$
Assume also that $d(x_0,y_0)<\eta$, where $\eta$ is the constant from
the preceding lemma. Then
\begin{equation}\label{contr1}
d(x_n,y_n) \le \frac{2+\frac{c}3d(x_0,y_0)}
{2+\frac{2c}3d(x_0,y_0)}d(x_0, y_0).
\end{equation}
\end{lemma}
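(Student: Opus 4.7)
Since $d$ is non-increasing (Lemma~\ref{d-contr}), I may assume $n$ is the first index with both $x_n, y_n \le 1/2$. Then at step $n-1$ we must have $\omega_{n-1} = 0$ and $y_{n-1} \in (1/2, 1-c]$, since $f_1$ preserves $[1/2, 1)$ and $f_0([1-c, 1)) \subset [1/2, 1)$. The analysis splits into two cases based on the position of $x_{n-1}$.

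If $x_{n-1} \ge 1/2$, then $x_{n-1}, y_{n-1} \in [1/2, 1-c]$ and both points cross $1/2$ simultaneously. Lemma~\ref{f-contr} yields
\[
d(x_n, y_n) \le \bigl(1 - \tfrac{2c}{3} d(x_{n-1}, y_{n-1})\bigr) d(x_{n-1}, y_{n-1}),
\]
and combined with Lemma~\ref{d-contr} and the monotonicity of $t \mapsto (1 - (2c/3)t) t$ on $[0, 3/(4c)]$ (shrink $\eta$ if needed), this gives $d(x_n, y_n) \le D - (2c/3) D^2$, where $D = d(x_0, y_0)$. The elementary inequality $(2c/3)(6 + 2cD) \ge 1$ then yields $(2c/3) D^2 \ge cD^2/(6 + 2cD)$, closing this case.

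If $x_{n-1} < 1/2$, set $r = d(x_{n-1}, 1/2)$ and $s = d(1/2, y_{n-1})$, so by Remark~\ref{equaltriang} $d(x_{n-1}, y_{n-1}) = r + s$. Using $x_n = ax_{n-1}$, $y_n = ay_{n-1}$ (both in $(0, 1/2]$), a direct calculation gives
\[
d(x_n, y_n) = r + \log(2 - e^{-s}) = d(x_{n-1}, y_{n-1}) - g(s),
\]
where $g(t) := t - \log(2 - e^{-t})$. Since $x_0 \ge 1/2$ while $x_{n-1} < 1/2$, there must be an earlier step $j^* - 1 \to j^*$ with $j^* < n$ at which the orbit of $x$ crossed $1/2$: $x_{j^*-1} \in [1/2, 1-c]$, and minimality of $n$ forces $y_{j^*-1} \in [1-c, 1)$ (otherwise $y$ would cross at step $j^*$, contradicting the minimality). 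Lemma~\ref{f-contr} applied to $(x_{j^*-1}, 1-c) \subset [1/2, 1-c]$ combined with Remark~\ref{equaltriang} gives $D - d(x_{n-1}, y_{n-1}) \ge (2c/3) \tilde\alpha^2$ with $\tilde\alpha = d(x_{j^*-1}, 1-c)$, so the desired inequality reduces to $(2c/3) \tilde\alpha^2 + g(s) \ge cD^2/(6 + 2cD)$ under $\tilde\alpha + \tilde\beta \le D$, where $\tilde\beta = d(1-c, y_{j^*-1})$.

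The main obstacle is this final combined inequality. Two auxiliary estimates should suffice: a bound of the form $g(t) \ge t^2/(2+t)$ (derivable from $g(0) = g'(0) = 0$ and $g''(t) > 0$ by elementary calculus), which handles the regime where $s$ is comparable to $D$; and the observation that if $s < \tilde\beta$, the orbit of $y$ between steps $j^*$ and $n-1$ must at some point decrease through $1-c$ under $f_0$, contributing an extra amount of contraction of order $g(\log b)$. Provided $\eta$ is chosen small enough that $g(\log b) \ge c\eta^2/(6+2c\eta)$, this handles the small-$s$ subcase where the direct argument fails.
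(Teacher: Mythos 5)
Your Case 1 is correct (modulo a small typo: the elementary inequality you need is $(2/3)(6+2cD)\ge 1$, not $(2c/3)(6+2cD)\ge 1$; the latter fails for small $c$), and in fact gives a slightly stronger bound $D-\tfrac{2c}{3}D^2$ than the statement requires. The problem is Case 2, where you yourself flag ``the main obstacle'' and then only gesture at a resolution. The reduction to the inequality $(2c/3)\tilde\alpha^2+g(s)\ge cD^2/(6+2cD)$ is not a proof: you have not pinned down what constraints relate $\tilde\alpha$, $\tilde\beta$, $s$ and $D$, and the constraint you do state, $\tilde\alpha+\tilde\beta\le D$, by itself does not force the inequality (nothing you've written rules out $\tilde\alpha$ and $s$ being simultaneously tiny compared to $D$). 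The two ``auxiliary estimates'' are left unverified, and the second one is based on a misreading of the dynamics: when $y_j\ge 1-c$ and $f_0$ is applied, $f_0$ is a $d$-isometry on $[1-c,1)$ by Lemma~\ref{f-contr}, so the step contributes no contraction of $d(x_j,y_j)$ in the form $g(\log b)$; the relevant fact is rather that $x_j<1/2\le 1-c\le y_j$ already forces $d(x_j,y_j)\ge d(1/2,1-c)=\log b$, which simply cannot occur once $\eta\le\log b$. More fundamentally, your choice of $j^*$ as ``an earlier crossing'' is ambiguous and problematic: after $x$ first drops below $1/2$ it may cross back above under $f_1$, so the picture between $j^*$ and $n-1$ is not controlled, and with the \emph{first} crossing the quantities $\tilde\alpha,\tilde\beta$ need not bear the relation to $s$ that the rest of your argument needs.

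The paper sidesteps all of this by choosing $k$ to be the \emph{last} index in $\{0,\dots,n-1\}$ with $1/2\le x_k<y_k$. Then either both points drop below $1/2$ at step $k+1$ (a one-step application of Lemma~\ref{f-contr}), or they straddle $1/2$ at step $k+1$, in which case there is an intermediate point $z_0\in(x_0,y_0)$ with $z_{k+1}=1/2$, hence $z_k=1-c$; the contraction is extracted from the single step $k\to k+1$ applied to the subinterval $[x_0,z_0]$ (or $[z_0,y_0]$, whichever is longer), and the other half of the interval is just bounded by Lemma~\ref{d-contr}. This makes the ``two pieces add up'' computation clean and finite, and Remark~\ref{equaltriang} closes it. Your scheme can likely be salvaged by taking $j^*$ to be the \emph{last} crossing of $x$ below $1/2$ and shrinking $\eta\le\log b$ (which forces $j^*=n-1$ and hence $s=\tilde\beta$, after which the reduced inequality does hold), but as written there is a genuine gap: the key inequality is unproved and the supporting reasoning for the ``small $s$'' case is incorrect.
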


\begin{proof}
Let $k$ be the largest integer from $\{0,1,\dots,n-1\}$ such that
$1/2\le x_k < y_k$. By Lemmas~\ref{nohalf} and~\ref{d-contr}, either
$x_{k+1} = f_0(x_k) < y_{k+1} = f_0(x_k) \le 1/2$ or
$x_{k+2} = f_0(x_{k+1}) < y_{k+2} = f_0(x_{k+1}) \le 1/2$
(in the latter case, $k \le n-2$).

In the first case, by Lemma~\ref{f-contr},
\[
d(x_{k+1},y_{k+1})\le\left(1-\frac{2c}3d(x_k,y_k)\right)d(x_k,y_k),
\]
so by Lemma~\ref{d-contr},
\[
d(x_n,y_n)\le\left(1-\frac{2c}3d(x_n,y_n)\right)d(x_0,y_0).
\]
This inequality implies
\begin{equation}\label{contr2}
d(x_n,y_n)\le\frac1{1+\frac{2c}3d(x_0,y_0)}d(x_0,y_0).
\end{equation}
If $\alpha>0$ then $1/(1+2\alpha)<(2+\alpha)/(2+2\alpha)$,
so~\eqref{contr1} follows in this case.

In the second case there is a point $z_0\in(x_0,y_0)$ such that
$z_{k+1}=1/2$. Then the first case applies if we replace $y_0$ by
$z_0$, and also if we replace $x_0$ by $z_0$. Suppose that
$d(x_0,z_0)\ge d(z_0,y_0)$ (if $d(x_0,z_0)<d(z_0,y_0)$ then the proof
is similar). Then, by~\eqref{contr2} (applied to $x_0$ and $z_0$),
Lemma~\ref{d-contr} and Remark~\ref{equaltriang}, we get
\begin{equation}\label{contr3}
d(x_n,y_n)=d(x_n,z_n)+d(z_n,y_n)\le
\frac1{1+\frac{2c}3d(x_0,z_0)}d(x_0,z_0)+d(z_0,y_0).
\end{equation}
Since $d(x_0,z_0)\ge d(z_0,y_0)$ and
$d(x_0,z_0)+d(z_0,y_0)=d(x_0,y_0)$, we have $d(x_0,z_0)\ge
d(x_0,y_0)/2$, so we can write
\[
d(x_0,z_0)=d(x_0,y_0)/2+\big(d(x_0,z_0)-d(x_0,y_0)/2\big)
\]
with $d(x_0,z_0)-d(x_0,y_0)/2\ge 0$. Thus,
\[
\frac1{1+\frac{2c}3d(x_0,z_0)}d(x_0,z_0)\le
\frac1{1+\frac{2c}3d(x_0,z_0)}\cdot\frac{d(x_0,y_0)}2+
\left(d(x_0,z_0)-\frac{d(x_0,y_0)}2\right).
\]
Together with~\eqref{contr3}, taking into account that
$d(x_0,z_0)+d(z_0,y_0)=d(x_0,y_0)$, we get
\[
d(x_n,y_n)\le\left(\frac1{1+\frac{2c}3d(x_0,z_0)}+1\right)
\frac{d(x_0,y_0)}2.
\]
Using $d(x_0,z_0)\ge d(x_0,y_0)/2$ again, we get
\begin{equation}\label{contr4}
d(x_n,y_n)\le\left(\frac1{1+\frac{c}3d(x_0,y_0)}+1\right)
\frac{d(x_0,y_0)}2=\frac{2+\frac{c}3d(x_0,y_0)}
{2+\frac{2c}3d(x_0,y_0)}d(x_0,y_0).
\end{equation}
Thus,~\eqref{contr1} also follows in this case.
\end{proof}

Define a function $\chi:[0,\infty)\to\R$ by
\[
\chi(t)=\begin{cases}
\dfrac{2+\frac{c}3 t}{2+\frac{2c}3 t}t & \text{if $0\le
  t\le\frac\eta2$,}\\ \\
\dfrac{2+\frac{c\eta}6}{2+\frac{c\eta}3}t & \text{if $t>\frac\eta2$,}
\end{cases}
\]
where $\eta$ is the constant from Lemma~\ref{nohalf}. It is easy to
see that $\chi$ is continuous, $\chi(0)=0$ and $\chi(t)<t$ if $t>0$.
Therefore, for every $t\ge 0$ we have
\begin{equation}\label{limzero}
\lim_{n\to\infty}\chi^n(t)=0.
\end{equation}
It is clear that $\chi$ is strictly increasing on $[\eta/2,\infty]$.
By differentiating the first formula defining $\chi$, one can easily
check that the same is true on $[0,\eta/2]$. Thus, $\chi$ is
invertible and for every $t>0$ we have
\begin{equation}\label{liminfty}
\lim_{n\to\infty}\chi^{-n}(t)=\infty.
\end{equation}

\begin{lemma}\label{long-contr}
Let $1/2\le x_0 < y_0$ and $x_n < y_n\le 1/2$ for some $n \ge 1.$
Then
\begin{equation}\label{l-contr}
d(x_n,y_n) \le \chi(d(x_0, y_0)).
\end{equation}
\end{lemma}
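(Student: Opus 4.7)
The plan is to reduce Lemma~\ref{long-contr} to Lemma~\ref{d-a-n-contr} via a subdivision of $[x_0, y_0]$ chosen so that each piece has $d$-length in $[\eta/2, \eta)$.

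First, if $d(x_0, y_0) \le \eta/2$, then in particular $d(x_0, y_0) < \eta$ and Lemma~\ref{d-a-n-contr} applies directly to give
\[
d(x_n, y_n) \le \frac{2 + (c/3) d(x_0, y_0)}{2 + (2c/3) d(x_0, y_0)}\, d(x_0, y_0) = \chi(d(x_0, y_0))
\]
by the first branch of the definition of $\chi$.

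For $d(x_0, y_0) > \eta/2$, I would pick a positive integer $k$ with $d(x_0, y_0)/k \in [\eta/2, \eta)$; such a $k$ always exists because the admissible range is the interval $(d(x_0, y_0)/\eta,\, 2 d(x_0, y_0)/\eta]$, which can be checked to contain an integer for every $d(x_0, y_0) > \eta/2$. I would then subdivide $[x_0, y_0]$ into $k$ pieces of equal $d$-length by setting $u_i = 1 - (1-x_0) e^{-i d(x_0, y_0)/k}$, producing $x_0 = u_0 < u_1 < \dots < u_k = y_0$ in $[1/2, 1)$ with $d(u_{i-1}, u_i) = d(x_0, y_0)/k$. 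Since $f_0$ and $f_1$ are orientation preserving homeomorphisms, the iterates preserve order, so $(u_{i-1})_n < (u_i)_n \le y_n \le 1/2$, which is exactly the hypothesis of Lemma~\ref{d-a-n-contr} for each consecutive pair. Applying that lemma and using that $t \mapsto (2 + (c/3)t)/(2 + (2c/3)t)$ is decreasing together with $d(u_{i-1}, u_i) \ge \eta/2$ yields
\[
d((u_{i-1})_n, (u_i)_n) \le \frac{2 + c\eta/6}{2 + c\eta/3}\, d(u_{i-1}, u_i).
\]
Summing these inequalities (which is legitimate by iterated use of Remark~\ref{equaltriang}, since order is preserved) gives
\[
d(x_n, y_n) \le \frac{2 + c\eta/6}{2 + c\eta/3}\, d(x_0, y_0) = \chi(d(x_0, y_0))
\]
by the second branch of the definition of $\chi$.

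The delicate point is the requirement that each piece of the subdivision have $d$-length in $[\eta/2, \eta)$: the upper bound $\eta$ is needed for Lemma~\ref{d-a-n-contr} to apply, while the lower bound $\eta/2$ is what forces the local contraction coefficient (which is decreasing in $t$) to be dominated by the constant $(2+c\eta/6)/(2+c\eta/3)$ that defines $\chi$ on $[\eta/2, \infty)$. Without the lower bound the $k$-fold sum could exceed the linear bound we want, and this is precisely why $\chi$ is defined with the breakpoint at $\eta/2$ rather than at $\eta$.
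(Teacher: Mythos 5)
Your proof is correct and follows essentially the same path as the paper: handle $d(x_0,y_0)\le\eta/2$ directly via Lemma~\ref{d-a-n-contr}, and for $d(x_0,y_0)>\eta/2$ subdivide $[x_0,y_0]$ into pieces of $d$-length in $[\eta/2,\eta)$, apply Lemma~\ref{d-a-n-contr} to each, use monotonicity of the coefficient, and sum via Remark~\ref{equaltriang}. The only difference is cosmetic: you give an explicit equal-length partition and verify that a suitable number of pieces $k$ exists, while the paper simply asserts the existence of such a subdivision.
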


\begin{proof}
If $d(x_0,y_0)\le\eta/2$, then~\eqref{l-contr} follows immediately
from Lemma~\ref{d-a-n-contr} and the definition of $\chi$. If
$d(x_0,y_0)>\eta/2$, then we can
divide the interval $[x_0,y_0]$
by taking points
$x_0=x_0^0 < x_0^1 < x_0^2 < \dots x_0^m = y_0$
such that $\eta/2 \le d(x_0^i,x_0^{i+1}) < \eta$ for $i=0,1,\dots,m-1$,
and apply Lemma~\ref{d-a-n-contr} to each of the intervals
$[x_0^i,x_0^{i+1}]$. We get
\[
 d(x_n,y_n) = \sum_{i=0}^{m-1} d(x_n^i,x_n^{i+1}) \le
 \sum_{i=0}^{m-1}
 \frac{2+\frac{c}3d(x_0^i,x_0^{i+1})}
     {2+\frac{2c}3d(x_0^i,x_0^{i+1})}
 d(x_0^i,x_0^{i+1}).
\]
Since for $t\ge\eta/2$ we have
\[
\frac{2+\frac{c}3 t}{2+\frac{2c}3 t}\le
\frac{2+\frac{c\eta}6}{2+\frac{c\eta}3},
\]
we obtain
\[
 d(x_n,y_n) \le
 \frac{2+\frac{c\eta}6}{2+\frac{c\eta}3}
 \sum_{i=0}^{m-1} d(x_0^i,x_0^{i+1}) =
 \frac{2+\frac{c\eta}6}{2+\frac{c\eta}3} d(x_0,y_0) =
 \chi(d(x_0,y_0)).
\]
\end{proof}

\begin{lemma}\label{forw-contr}
Let $\uom \in \Gamma$ and let $x_0, y_0 \in (0,1)$. Then
$\lim_{n\to\infty} d(x_n,y_n) = 0$.
\end{lemma}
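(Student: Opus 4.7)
The plan is to combine the one-step contraction from Lemma~\ref{long-contr} with the fact (Lemma~\ref{InfVisits}) that for $\uom\in\Gamma$ every orbit visits both halves of $I$ infinitely often, and with monotonicity of $d(x_n,y_n)$ from Lemma~\ref{d-contr}. The sequence $d(x_n,y_n)$ will be nonincreasing and at infinitely many times it will be further squeezed by the contracting function $\chi$, which forces the limit to be $0$.

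Without loss of generality assume $x_0<y_0$; since each $f_j$ is an orientation-preserving homeomorphism, $x_n<y_n$ for every $n\ge 0$. By Lemma~\ref{InfVisits} applied to the orbits of $x_0$ and $y_0$ separately, the sets
\[
M=\{k\ge 0 : x_k\ge\tfrac12\}\qquad\text{and}\qquad N=\{k\ge 0 : y_k\le\tfrac12\}
\]
are both infinite. Choosing them alternately, I obtain indices $m_1<n_1<m_2<n_2<\cdots$ with $m_k\in M$ and $n_k\in N$. For each $k$ we then have $\tfrac12\le x_{m_k}<y_{m_k}$ (because $x_{m_k}\ge\tfrac12$ and $x_{m_k}<y_{m_k}$) and $x_{n_k}<y_{n_k}\le\tfrac12$ (because $y_{n_k}\le\tfrac12$ and $x_{n_k}<y_{n_k}$). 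Lemma~\ref{long-contr} applied to the pair $(x_{m_k},y_{m_k})$ with the composition of the $n_k-m_k$ fiber maps after it therefore gives
\[
d(x_{n_k},y_{n_k})\le\chi\!\left(d(x_{m_k},y_{m_k})\right).
\]

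On the other hand, Lemma~\ref{d-contr} says that $d(x_n,y_n)$ is nonincreasing in $n$, so it converges to some $L\ge 0$. Passing to the limit in the previous inequality and using continuity of $\chi$ (stated right after the definition of $\chi$), I get $L\le\chi(L)$. Since $\chi(t)<t$ for every $t>0$, this forces $L=0$, which is exactly the claim. The only mildly delicate point to keep straight is the orientation/ordering bookkeeping that justifies dropping into the hypotheses of Lemma~\ref{long-contr}; once $x_0<y_0$ is fixed, monotonicity of the fiber maps makes everything else routine.
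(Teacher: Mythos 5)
Your proof is correct and follows essentially the same route as the paper: use Lemma~\ref{InfVisits} to find alternating passages across $1/2$, Lemma~\ref{long-contr} to get a strict contraction by $\chi$ on each such passage, and Lemma~\ref{d-contr} for the intermediate steps. The only difference is the concluding argument: the paper iterates $\chi$ to get $d(x_{n_k},y_{n_k})\le\chi^{k-1}(d(x_{n_1},y_{n_1}))$ and invokes~\eqref{limzero}, whereas you take the monotone limit $L$ of $d(x_n,y_n)$ and derive $L\le\chi(L)$, hence $L=0$; this is a slightly more economical way to finish and avoids needing~\eqref{limzero}.
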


\begin{proof}
We may assume that $x_0 < y_0$. By Lemma~\ref{InfVisits}, there are
increasing sequences $(n_k)$ and $(m_k)$ such that $n_k < m_k <
n_{k+1}$ and $y_{m_k} \le 1/2 \le x_{n_k}.$ By
Lemma~\ref{long-contr} we have $d(x_{m_k},y_{m_k}) \le
\chi(d(x_{n_k}, y_{n_k})).$ By this and Lemma~\ref{d-contr} used
inductively, we get $d(x_{n_{k+1}},y_{n_{k+1}}) \le \chi(d(x_{n_k},
y_{n_k})).$ Thus, by induction, $d(x_{n_k},y_{n_k}) \le
\chi^{k-1}(d(x_{n_1}, y_{n_1})).$ By~\eqref{limzero}, we get $\lim_{n_k
\to \infty} d(x_{n_k},y_{n_k}) = 0.$ Using again Lemma~\ref{d-contr}
inductively and taking into account that $n_k < n_{k+1}$ (so $n_k \to
\infty$ as $k \to \infty$), we get $\lim_{n \to \infty} d(x_{n},y_{n})
= 0.$
\end{proof}

The derivative of the function $h$, which is used to define distance
$d$, is larger than 1. Therefore $|x-y|\le d(x,y)$ for all
$x,y\in(0,1)$. In such a way we get from Lemma~\ref{forw-contr} the
desired result.

\begin{theorem}\label{forw-contr1}
For almost all $\uom\in\Sigma$, if $x_0, y_0 \in (0,1)$ then
$\lim_{n\to\infty} |x_n-y_n| = 0$.
\end{theorem}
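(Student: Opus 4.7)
The plan is to reduce the desired Euclidean convergence to the metric-$d$ convergence already obtained in Lemma~\ref{forw-contr}, via the pointwise comparison $|x-y|\le d(x,y)$ on $(0,1)$. First I would restrict attention to the set $\Gamma$, which has full $\mu$-measure by the Birkhoff Ergodic Theorem. For any $\uom\in\Gamma$ and any $x_0,y_0\in(0,1)$, Lemma~\ref{forw-contr} supplies $d(x_n,y_n)\to 0$, so the only remaining task is the metric inequality.

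To establish $|x-y|\le d(x,y)$, I would split according to position relative to $1/2$. On each of the intervals $(0,1/2]$ and $[1/2,1)$ the defining function $h$ has derivative $1/x$ or $1/(1-x)$, both of which are bounded below by $2$ on $(0,1)$; the mean value theorem therefore gives
\[
|x-y|\le|h(x)-h(y)|=d(x,y)
\]
whenever $x$ and $y$ lie on the same side of $1/2$. If instead $x<1/2<y$, then the Euclidean triangle identity $|x-y|=(1/2-x)+(y-1/2)$, combined with the one-sided estimate applied to each summand, together with the additivity $d(x,y)=d(x,1/2)+d(1/2,y)$ supplied by Remark~\ref{equaltriang}, yields the same bound.

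Putting the pieces together: for $\uom\in\Gamma$ and arbitrary $x_0,y_0\in(0,1)$, we have $|x_n-y_n|\le d(x_n,y_n)\to 0$, which is exactly the statement of the theorem. I do not anticipate a real obstacle at this stage, since all the substantive analytic work is encoded in the $d$-contraction estimates leading up to Lemma~\ref{forw-contr}. The present theorem is in effect a packaging step, translating the contraction from the distorted metric $d$ back to the Euclidean metric that appears in the definition of \emph{essentially contracting}, which is what is needed to invoke Theorem~\ref{main} for the piecewise linear family.
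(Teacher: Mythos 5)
Your argument matches the paper's proof: both deduce the theorem from Lemma~\ref{forw-contr} via the pointwise comparison $|x-y|\le d(x,y)$, which the paper justifies by noting that $h'\ge 1$ on $(0,1)$ (you observe the sharper bound $h'\ge 2$, but the conclusion is the same). The case-splitting by position relative to $1/2$ is a slightly more explicit rendering of the same idea and is correct.
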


\begin{corollary}\label{exmain}
The map $F$ considered in this section satisfies the assumptions of
Theorem~\ref{main}.
\end{corollary}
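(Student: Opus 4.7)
My plan is to verify the three hypotheses \eqref{mi1}--\eqref{mi3} of Theorem~\ref{main} in turn for the skew product $F$ built from $f_0,f_1$ over the symmetric Bernoulli shift; all of the real difficulty has already been absorbed into the lemmas of this section, so the corollary amounts to bookkeeping.

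For \eqref{mi1}, I would read the one-sided derivatives at the endpoints off the piecewise linear formulas. Since $0<c<1/2$, the point $0$ lies in the shallow branch of $f_0$ and the steep branch of $f_1$, giving $f_0'(0)=a$ and $f_1'(0)=b$; the symmetry $f_1(x)=1-f_0(1-x)$ forces $f_0'(1)=b$ and $f_1'(1)=a$. With Bernoulli weights $(1/2,1/2)$, the fiberwise Lyapunov exponents at the two levels therefore coincide and equal $\tfrac12\log(ab)$. The identity $ab=\tfrac{1}{4c(1-c)}$ together with $c(1-c)<1/4$ on $(0,1/2)$ yields $ab>1$, and hence $\Lambda_0,\Lambda_1>0$. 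For \eqref{mi2}, the family $\{f_\te:\te\in\Sigma\}$ is literally the two-element set $\{f_0,f_1\}$, so the finiteness alternative applies immediately, with no smoothness assumption needed.

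For \eqref{mi3}, I would invoke Theorem~\ref{forw-contr1}: the essentially contracting condition of Definition~\ref{esscontr} is precisely its conclusion, that $|\pi_2(F^n(\uom,x))-\pi_2(F^n(\uom,y))|\to 0$ for $\mu$-almost every $\uom$ and all $x,y\in(0,1)$. This is where the real content sits, having occupied Lemmas~\ref{deriv} through~\ref{forw-contr}; once contraction holds in the transported metric $d$, the passage back to the Euclidean distance uses only the elementary comparison $|x-y|\le d(x,y)$ reflecting $h'>1$. With all three hypotheses verified, the corollary follows by a direct appeal to Theorem~\ref{main}, and the main obstacle (the essentially contracting property) is exactly what the bulk of this section was set up to overcome.
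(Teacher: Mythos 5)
Your proposal is correct and follows exactly the route taken in the paper: hypothesis~\eqref{mi1} from $ab=\frac{1}{4c(1-c)}>1$, hypothesis~\eqref{mi2} from the two-element set $\{f_0,f_1\}$, and hypothesis~\eqref{mi3} from Theorem~\ref{forw-contr1}, whose proof is the substance of the section. The only addition you make is spelling out the endpoint derivatives $f_0'(0)=a$, $f_1'(0)=b$, $f_0'(1)=b$, $f_1'(1)=a$, which the paper leaves implicit; this is harmless and slightly clarifying.
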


\begin{remark}\label{1-sid-contr}
If instead of $F$ we consider the map $F_+$, which is a skew product
over the one-sided shift, for a given $\uom\in\Sigma_+$ and $x_0\in I$
we get the same $x_n$ as for $F$ when we replace $\uom$ by any
two-sided sequence with the same $\omega_k$ for $k\ge 0$. Therefore
Theorem~\ref{forw-contr1} holds also if we replace $\Sigma$ by
$\Sigma_+$ and $F$ by $F_+$.
\end{remark}

\section{Measures}\label{sec-measures}

We continue to investigate $F$ and $F_+$, this time from the point of
view of invariant measures. The relevant invariant measures for $F$
and $F_+$ are those that project to $\mu$ and $\mu_+$. There are two
trivial ergodic ones: $\mu\times\delta_0$ and $\mu\times\delta_1$ (in
the one-sided case, $\mu_+\times\delta_0$ and $\mu_+\times\delta_1$).

By Theorem~\ref{onemeasure} and Corollary~\ref{exmain}, there is at most one nontrivial measure
of this type. Such measure for $F$ is $\mu_\phi$, which appears in
Theorem~\ref{main}~\eqref{ma5}. It is clear that the projection from
$\Sigma\times I$ to the first coordinate is an isomorphism of the
systems $(\Sigma\times I,F,\mu_\phi)$ and $(\Sigma,\sigma,\mu)$. In
particular, this shows that $\mu_\phi$ is ergodic for $F$.

Now we consider $F_+$. Here the situation is completely different.
Denote the Lebesgue measure on $I$ by $\lambda$. The following theorem
can be interpreted as the Lebesgue measure being invariant for our
random system of maps. The proof is straightforward and specific for
our family.

\begin{theorem}\label{invmeas}
The measure $\mu_+ \times \lambda$ is invariant for $F_+$.
\end{theorem}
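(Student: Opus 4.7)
The plan is to use Fubini together with the product structure of the Bernoulli measure $\mu_+$ to reduce invariance of $\mu_+\times\lambda$ under $F_+$ to a simple identity for the push-forwards of $\lambda$ under $f_0$ and $f_1$.

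First, since $\mu_+$ is the $(1/2,1/2)$-Bernoulli measure and the fiber map depends only on $\omega_0$, the coordinates $\omega_0$ and $\sigma_+(\uom)$ are independent, with $\omega_0$ uniform on $\{0,1\}$ and $\sigma_+(\uom)$ distributed as $\mu_+$. I would use this to rewrite, for any bounded measurable $g$ on $\Sigma_+\times I$,
\begin{equation*}
\int g\circ F_+\,d(\mu_+\times\lambda)
= \frac{1}{2}\sum_{j=0}^{1}\int_{\Sigma_+}\int_{I} g(\uom,f_j(x))\,d\lambda(x)\,d\mu_+(\uom).
\end{equation*}
Comparing this with $\int g\,d(\mu_+\times\lambda)=\int_{\Sigma_+}\int_I g(\uom,x)\,d\lambda(x)\,d\mu_+(\uom)$, it suffices to show the fiberwise identity
\begin{equation*}
\tfrac{1}{2}\bigl((f_0)_*\lambda + (f_1)_*\lambda\bigr) = \lambda.
\end{equation*}

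Next I would verify this directly from the piecewise linear formulas. On $[0,1-c]$ the map $f_0$ has slope $a$ and sends this interval onto $[0,1/2]$, while on $[1-c,1]$ it has slope $b$ and maps onto $[1/2,1]$. Hence $(f_0)_*\lambda$ has density $1/a$ on $[0,1/2]$ and $1/b$ on $[1/2,1]$. By symmetry (or by the same computation for $f_1$), $(f_1)_*\lambda$ has density $1/b$ on $[0,1/2]$ and $1/a$ on $[1/2,1]$. Using $1/a=2(1-c)$ and $1/b=2c$ I then get
\begin{equation*}
\frac{1}{2}\left(\frac{1}{a}+\frac{1}{b}\right) = \frac{1}{2}\bigl(2(1-c)+2c\bigr)=1
\end{equation*}
on each half of $I$, so the averaged push-forward has density $1$ everywhere and equals $\lambda$.

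There is no real obstacle here: the identity $\frac{1}{2}(1/a+1/b)=1$ is exactly the condition already observed after the definition of $f_0$ and $f_1$ that the harmonic mean of the slopes equals $1$, and the Bernoulli step uses only that $f_\te$ depends solely on $\omega_0$, so the whole argument is a short direct computation.
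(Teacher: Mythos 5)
Your proof is correct and follows essentially the same route as the paper's: both exploit the Bernoulli product structure to split over $\omega_0$ with $\sigma_+(\uom)$ independent of it and distributed as $\mu_+$, and both reduce invariance to the identity $\tfrac12(1/a+1/b)=1$. You phrase the reduction via Fubini and the averaged push-forward $\tfrac12\bigl((f_0)_*\lambda+(f_1)_*\lambda\bigr)=\lambda$, whereas the paper checks the measure of preimages on generating rectangles $C\times A$; these are two standard, equivalent formulations of the same computation.
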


\begin{proof}
Let $\eps_i \in \{0,1\}$ for $i=0,1,\dots,n-1$ and let
\[
C=C(\eps_0,\eps_1,\dots,\eps_{n-1}):=\set{(\omega_0,\omega_1,\dots)}
{\omega_i = \eps_i \text{ for $i=0,1,\dots,n-1$}}
\]
be an $n$-cylinder of the one-sided shift and let $A \subset [0,1/2]$
or $A \subset [1/2,1]$ be a $\lambda$-measurable set.

Then, $F_+^{-1}(C \times A) = (C_0 \times A_0) \cup (C_0 \times A_1),$
where, for $j\in \{0,1\}$, $A_j = f^{-1}_j(A)$ and
\[
C_j = \set{\uom}{\text{$\omega_0 = j$ and $\omega_i = \eps_i$ for
$i=1,2,\dots,n-1$}}.
\]
Since $\tfrac{1}{a} + \tfrac{1}{b} = 2$ we have $\lambda(A_0) +
\lambda(A_1) = \tfrac{\lambda(A)}{a} + \tfrac{\lambda(A)}{b} =
2\lambda(A)$ and clearly $\mu_{+}(C_j) = \tfrac{1}{2} \mu_{+}(C).$
Therefore,
\[
(\mu_{+} \times \lambda)\left(F^{-1}_{+}(C \times A)\right) =
\frac{1}{2} \mu_{+}(C) \cdot 2\lambda(A) = (\mu_{+} \times \lambda)(C
\times A).
\]

The sets of the form $C \times A$ with $C,\ A$ as above generate the
whole $\sigma$-field of $\mu_{+} \times \lambda$-measurable sets. This
completes the proof.
\end{proof}

Once we know this measure, let us compute the Lyapunov exponent in the
direction of the fiber. For each $f_j$, the derivative is $a$ on an
interval of length $1/(2a)$ and $b$ on an interval of length $1/(2b)$.
Therefore the exponent is
\[
\frac1{2a} \log a+ \frac1{2b} \log b .
\]
We have $1/(2a)=1-c$ and $1/(2b)=c$. Therefore
\begin{align*}
\frac1{2a} \log a+ \frac1{2b} \log b& = (1-c)(-\log2-\log(1-c))
+c(-\log2-\log c)\\
& =-(1-c)\log(1-c)-c\log c-\log2.
\end{align*}
Since $0<c<1/2$, this exponent is negative. This agrees with
Theorem~\ref{forw-contr1}.

Recall that $P:\Sigma\to\Sigma_+$ is the natural projection (that
forgets about $\omega_n$ with negative $n$).

\begin{proposition}\label{image_meas}
We have
\[
(P\times\id_I)_*(\mu_\phi)=\mu_+\times\lambda.
\]
\end{proposition}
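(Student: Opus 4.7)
The plan is to combine Theorem~\ref{projection}, Theorem~\ref{invmeas}, and the essentially contracting property (Remark~\ref{1-sid-contr}) to identify the measure that Theorem~\ref{projection} produces with the Lebesgue measure. First, applying Theorem~\ref{projection} to $F_+$, we already have $(P\times\id_I)_*(\mu_\phi)=\mu_+\times\nu$ for some probability measure $\nu$ on $(0,1)$, so the task reduces to proving $\nu=\lambda$.

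The first step is to verify that $\mu_+\times\nu$ is $F_+$-invariant and ergodic. Invariance is automatic because $P\times\id_I$ intertwines $F$ and $F_+$, i.e., $(P\times\id_I)\circ F=F_+\circ(P\times\id_I)$, and $\mu_\phi$ is $F$-invariant. Ergodicity of $\mu_+\times\nu$ follows from the ergodicity of $\mu_\phi$ under $F$ (noted just before Theorem~\ref{invmeas}), since a measurable factor map preserves ergodicity of the pushed-forward measure.

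Next I would use the essentially contracting property to pin down $\mu_+\times\lambda$. Fix a continuous $g\colon\Sigma_+\times I\to\R$ and write the Birkhoff sum $S_n g(\uom,x)=\tfrac{1}{n}\sum_{k=0}^{n-1}g(F_+^k(\uom,x))$. By $F_+$-ergodicity of $\mu_+\times\nu$ and the Birkhoff theorem, $S_n g(\uom,x)\to\int g\,d(\mu_+\times\nu)$ on a set of full $\mu_+\times\nu$-measure; by Fubini, for $\mu_+$-a.e.\ $\uom$ there exists some $x_\uom\in(0,1)$ at which this holds. By Remark~\ref{1-sid-contr}, for $\mu_+$-a.e.\ $\uom$ any two orbits starting from points in $(0,1)$ become asymptotic, so uniform continuity of $g$ on the compact space $\Sigma_+\times I$ together with Ces\`aro averaging yield $S_n g(\uom,y)-S_n g(\uom,x_\uom)\to 0$ for every $y\in(0,1)$. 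Hence $S_n g(\uom,y)\to\int g\,d(\mu_+\times\nu)$ for $(\mu_+\times\lambda)$-a.e.\ $(\uom,y)$, since $\lambda$ is supported in $(0,1)$.

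Finally, since $\mu_+\times\lambda$ is $F_+$-invariant by Theorem~\ref{invmeas}, we have $\int S_n g\,d(\mu_+\times\lambda)=\int g\,d(\mu_+\times\lambda)$ for every $n$, and the bounded convergence theorem gives $\int g\,d(\mu_+\times\lambda)=\int g\,d(\mu_+\times\nu)$. Since continuous functions separate Borel probability measures on the compact metric space $\Sigma_+\times I$, we conclude $\mu_+\times\lambda=\mu_+\times\nu$, i.e., $\nu=\lambda$. The most delicate step is the one where I pass from generic points for $\mu_+\times\nu$ to almost every point for $\mu_+\times\lambda$; this relies crucially on Remark~\ref{1-sid-contr}, which ensures the essentially contracting property is available for $F_+$ and not only for $F$, and on the fact that $\mu_+\times\nu$ gives positive measure to $(0,1)$ in the fiber so that an $x_\uom$ with the required convergence exists for $\mu_+$-a.e.\ $\uom$.
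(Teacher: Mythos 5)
Your proof is correct, but it takes a genuinely different route from the paper's. The paper's argument is shorter and more abstract: it observes that $(P\times\id_I)_*(\mu_\phi)$ is $F_+$-invariant and gives measure zero to $\Sigma_+\times\{0,1\}$, and then invokes the uniqueness statement of Theorem~\ref{onemeasure} together with Theorem~\ref{invmeas} to force equality with $\mu_+\times\lambda$. You instead bypass Theorem~\ref{onemeasure} entirely: after reducing to $\nu=\lambda$ via Theorem~\ref{projection}, you run a Birkhoff--plus--essential-contraction argument from scratch, showing that Birkhoff averages along $(\mu_+\times\lambda)$-almost every orbit converge to $\int g\,d(\mu_+\times\nu)$, and then identify the two measures by integrating against continuous test functions. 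Your route is longer, but it has a concrete advantage that is worth noting: the paper's appeal to Theorem~\ref{onemeasure} implicitly requires knowing that $\mu_+\times\lambda$ is ergodic (or an unstated detour through ergodic decomposition), since that theorem is a uniqueness statement among \emph{ergodic} measures; your argument only needs ergodicity of $\mu_+\times\nu$ (which you correctly deduce from ergodicity of $\mu_\phi$ under the factor map), and treats $\mu_+\times\lambda$ merely as an invariant measure, with invariance plus bounded convergence doing the rest. In effect you have unrolled the proof of Theorem~\ref{onemeasure} inside this proposition, in a slightly stronger form (one ergodic measure against one invariant measure, rather than two ergodic ones). Both proofs hinge on the same essential ingredient, namely the essential contraction of $F_+$ established via Remark~\ref{1-sid-contr}.
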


\begin{proof} We have $P_*(\mu)=\mu_+$, so
$(P\times\id_I)_*(\mu_\phi)$ is a measure invariant for $F_+$. This
measure vanishes on the set $\Sigma_+\times\{0,1\}$, so by
Theorems~\ref{onemeasure} and~\ref{invmeas} it is equal to
$\mu_+\times\lambda$.
\end{proof}

Let us comment on invariant measures for the random systems we are
considering. We assume that $F_+$ is essentially contracting and the
base system is Bernoulli. By Theorem~\ref{onemeasure}, there is one
nontrivial measure invariant for $F_+$ that projects to $\mu_+$. By
Theorem~\ref{projection}, it is of the form $\mu_+\times\nu$ for some
measure $\nu$ on the interval. Thus, the question about the existence
of an absolutely continuous measure for our system is the question
whether this specific measure $\nu$ is absolutely continuous. This is
very different from the situation for non-random interval maps, when
there is a lot of invariant measures and we are asking only whether
there is one among them which is absolutely continuous. We conjecture
that typically (whatever this means) the measure $\nu$ is not
absolutely continuous. The systems considered in Theorem~\ref{invmeas}
are very special, and $\nu=\lambda$ just follows from the definition
of the maps.

Now we can prove some interesting properties of the function $\phi$.

\begin{theorem}\label{dense}
For almost every $x\in I$ the preimage $\phi^{-1}(x)$ is dense in
$\Sigma$. In particular, the graph of $\phi$ is dense in $\Sigma\times
I$.
\end{theorem}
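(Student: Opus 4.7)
The plan is to reduce density of $\phi^{-1}(x)$ in $\Sigma$ to showing that, for every cylinder $C_-$ in $\Sigma_-$, the image $\phi_-(C_-)$ has full Lebesgue measure in $(0,1)$. By Lemma~\ref{pastphi} and the notation from Theorem~\ref{projection}, $\phi(\uom)=\phi_-(\uom_-)$, so $\phi^{-1}(x)=\phi_-^{-1}(x)\times\Sigma_+$. Since basic open sets in $\Sigma$ have the form $C_-\times C_+$, density of $\phi^{-1}(x)$ in $\Sigma$ is equivalent to density of $\phi_-^{-1}(x)$ in $\Sigma_-$, which in turn means $\phi_-^{-1}(x)$ meets every cylinder $C_-$, that is, $x\in\phi_-(C_-)$. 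Since there are only countably many cylinders, it suffices to show $\lambda\bigl((0,1)\setminus\phi_-(C_-)\bigr)=0$ for each one, and then take the intersection.

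The key input is that $(\phi_-)_*\mu_-=\lambda$, which I would extract by comparing Proposition~\ref{image_meas} with Theorem~\ref{projection}: the former says $(P\times\id_I)_*\mu_\phi=\mu_+\times\lambda$, while the latter expresses the same object as $\mu_+\times\nu$ with $\nu=(\phi_-)_*\mu_-$, forcing $\nu=\lambda$. In particular, $\phi_-(\Sigma_-)$ has full $\lambda$-measure in $(0,1)$ (the image of a measurable function is analytic, hence Lebesgue measurable, and its complement has empty $\phi_-$-preimage, so zero $\lambda$-measure).

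The next step is a self-similarity identity for $\phi_-$. Fix a cylinder $C_-=\{\uom_-:\omega_{-1}=\eps_1,\dots,\omega_{-k}=\eps_k\}$ and set $g=f_{\eps_1}\circ\cdots\circ f_{\eps_k}$. From the explicit formula $\phi_{n,m}(\uom)=f_{\omega_{-1}}\circ\cdots\circ f_{\omega_{-n}}(1/m)$ together with the fact that each $f_j$ is an orientation-preserving homeomorphism (so commutes with $\inf$), I get, for $\uom_-\in C_-$,
\[
\phi_-(\uom_-)=g\bigl(\phi_-(\tau^k\uom_-)\bigr),
\]
where $\tau^k$ shifts the past indices by $k$. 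Because $\tau^k$ maps $C_-$ bijectively onto $\Sigma_-$, this gives $\phi_-(C_-)=g(\phi_-(\Sigma_-))$. Now $g$ is a composition of piecewise linear orientation-preserving homeomorphisms, hence a bi-Lipschitz homeomorphism of $[0,1]$ that sends Lebesgue null sets to Lebesgue null sets; applied to the full-measure set $\phi_-(\Sigma_-)$, it yields a full-measure set $\phi_-(C_-)\subset(0,1)$.

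Letting $E=\bigcup_{C_-}\bigl((0,1)\setminus\phi_-(C_-)\bigr)$ over all (countably many) cylinders $C_-$, $E$ is $\lambda$-null, and for every $x\in(0,1)\setminus E$ the set $\phi_-^{-1}(x)$ meets every cylinder, so it is dense in $\Sigma_-$; hence $\phi^{-1}(x)$ is dense in $\Sigma$. For the ``in particular'' clause, given any nonempty open $U\times V\subset\Sigma\times I$, pick $x\in V\setminus E$; then $\phi^{-1}(x)\cap U\neq\varnothing$, so the graph of $\phi$ meets $U\times V$. The only delicate point is the measurability of $\phi_-(\Sigma_-)$ and the use of $(\phi_-)_*\mu_-=\lambda$ to pass from ``nothing in the complement'' to ``complement is $\lambda$-null''; everything else is bookkeeping around the self-similarity formula.
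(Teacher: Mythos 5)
Your proof is correct and follows the same strategy as the paper's: reduce to showing that the image of $\phi$ restricted to any cylinder has full Lebesgue measure; observe that this image is the image under a fixed piecewise linear homeomorphism $g$ (a composition of the $f_j$'s) of the full set of values of $\phi$, which has full measure by Proposition~\ref{image_meas}; and then intersect over the countable basis of cylinders. The paper's version works directly with cylinders in $\Sigma$ and the $F$-invariance of the graph (Remark~\ref{invgr}); yours factors cleanly through $\phi_-$ on $\Sigma_-$, derives the self-similarity $\phi_-(\uom_-)=g(\phi_-(\tau^k\uom_-))$ from the explicit formula for $\pnm$ (using that $g$ is an orientation-preserving homeomorphism and so commutes with $\inf$ and limits), and makes explicit the measurability point that $\phi_-(\Sigma_-)$ is analytic with $\lambda$-null complement. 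The underlying mechanism is identical, and your write-up is, if anything, more careful about the bookkeeping than the original.
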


\begin{proof}
Choose a cylinder $C=C(\eps_{-n},\eps_{-n+1},\dots,\eps_n)$, analogous
as in the proof of Theorem~\ref{invmeas}. By Lemma~\ref{pastphi}, on
$\sigma^n(C)$ the function $\phi$ takes all values that it takes on
the whole space. However, by Proposition~\ref{image_meas} and since
$\mu_\phi$ is concentrated on the graph of $\phi$, it takes almost all
values from $I$. By Remark~\ref{invgr},
\[
\phi(\sigma^n(\uom))=\pi_2(F^n(\uom,\phi(\uom))).
\]
If $\uom\in\C$, then
\[
\pi_2(F^n(\uom,\phi(\uom)))=(f_{\eps_{-n+1}}\circ f_{\eps_{-n+2}}
\circ\cdots\circ f_{\eps_0})(\phi(\uom)).
\]
The map $f_{\eps_{-n+1}}\circ f_{\eps_{-n+2}}\circ\cdots\circ
f_{\eps_0}$ is a homeomorphism preserving the Lebesgue equivalence
class, and therefore $\phi$ takes on $C$ almost all values from $I$.

Cylinders form a countable basis of the topological space $\Sigma$ and
the intersection of a countable family of sets of full measure has
full measure. Therefore for almost every $x\in I$ the preimage
$\phi^{-1}(x)$ is dense in $\Sigma$.

The second statement of the theorem follows immediately from the first
one.
\end{proof}

\section{Two-sided vs.\ one sided case}\label{2s1s}

By Theorem~\ref{main}~\eqref{ma3} and Corollary~\ref{exmain}, the map
$F$ has a fiberwise attractor which is a graph of a measurable
invariant function from the base to the fiber space. We will show
that this is not the case if we consider $F_+$, even if we skip the
assumption of invariance.

\begin{theorem}\label{noattr}
There is no measurable function $\phi_+:\Sigma_+ \to (0,1)$ whose
graph is an attractor for $F_+$ in the sense that for almost every
$\uom\in\Sigma_+$ and every $x_0\in(0,1)$ we have
\[
\lim_{n\to\infty}|x_n-\phi_+(\sigma_+^n(\uom))|=0.
\]
\end{theorem}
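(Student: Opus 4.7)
The plan is to assume such a $\phi_+$ exists and derive a contradiction with Lemma~\ref{pastphi} by showing that the two-sided attractor function $\phi$ would then have to depend only on the non-negative coordinates of $\uom$, conflicting with its dependence only on the strictly negative ones.

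My first step is to notice that the fiber dynamics of $F$ and $F_+$ are compatible with the semiconjugacy $P$: for any $\uom \in \Sigma$ and $x \in I$, both $\pi_2(F^n(\uom,x))$ and $\pi_2(F_+^n(P(\uom),x))$ are determined by $\omega_0,\dots,\omega_{n-1}$ and $x$, so they agree. Combined with Theorem~\ref{main}\eqref{ma2} and the identity $\sigma_+ \circ P = P \circ \sigma$, this gives
\[
\pi_2\bigl(F_+^n(P(\uom),\phi(\uom))\bigr) = \phi(\sigma^n\uom)
\]
for $\mu$-a.e.\ $\uom \in \Sigma$. Since $P_*\mu = \mu_+$, for $\mu$-a.e.\ $\uom$ the base point $P(\uom)$ lies in the full-measure set where the hypothetical attractor property holds, and Theorem~\ref{main} gives $\phi(\uom) \in (0,1)$ a.e. Applying the attractor hypothesis with starting fiber coordinate $\phi(\uom)$ then yields
\[
\bigl|\phi(\sigma^n\uom) - \phi_+(P(\sigma^n\uom))\bigr| \to 0 \quad \text{for $\mu$-a.e.\ } \uom.
\]

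Next I would set $\psi := \phi - \phi_+\circ P$ on $\Sigma$, which is measurable since $\phi_+$ is assumed so. The display above says $\psi\circ\sigma^n \to 0$ $\mu$-a.e. Because $\sigma$ preserves $\mu$, each $\psi\circ\sigma^n$ has the same $\mu$-distribution as $\psi$; almost sure convergence implies convergence in probability, so for any $\eps>0$ the common value $\mu(\{|\psi|>\eps\}) = \mu(\{|\psi\circ\sigma^n|>\eps\})$ tends to $0$, hence is $0$. Thus $\phi = \phi_+\circ P$ $\mu$-a.e., so $\phi$ is $\mu$-a.e.\ equal to a function of $(\omega_n)_{n\ge 0}$ alone. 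But Lemma~\ref{pastphi} says $\phi$ is $\mu$-a.e.\ equal to a function of $(\omega_n)_{n<0}$ alone. Since the past and future $\sigma$-algebras are independent under the Bernoulli measure $\mu$, any random variable measurable with respect to both is $\mu$-a.e.\ constant. Then $\mu_\phi$ would be concentrated on a single horizontal line, contradicting Proposition~\ref{image_meas}, whose content is that the fiber marginal of $(P\times\id_I)_*(\mu_\phi)$ is the non-atomic Lebesgue measure $\lambda$.

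The main obstacle, as I see it, lies in the choice of the starting fiber coordinate: the attractor hypothesis is universal in $x_0 \in (0,1)$ but only $\mu_+$-a.e.\ in the base, so I need to substitute the \emph{$\uom$-dependent} value $x_0 = \phi(\uom)$ into the attractor property for the base point $P(\uom)$. This is precisely why I need the convergence to hold for every $x_0$ rather than for almost every $x_0$ — a feature built into the theorem's statement — and why the base-set exceptional null set must be pulled back through $P$, which is legitimate since $P_*\mu = \mu_+$. Once this is in place, the equidistribution-implies-zero argument and the invocation of Kolmogorov's trivial-intersection principle for past and future complete the contradiction.
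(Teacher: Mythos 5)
Your proof is correct, and it takes a genuinely different route from the paper's. The paper establishes $\phi_+\circ P=\phi$ a.e.\ by invoking a uniqueness-of-attractor theorem from the cited preprint~\cite{AM}, then argues that the graph measure $(\mu_+)_{\phi_+}$ is a nontrivial ergodic $F_+$-invariant measure, hence equals $\mu_+\times\lambda$ by Theorems~\ref{onemeasure} and~\ref{invmeas}, which is absurd for a measure concentrated on a graph. You instead prove $\phi=\phi_+\circ P$ a.e.\ directly and self-containedly: feeding $x_0=\phi(\uom)$ into the attractor hypothesis, using the $F$-invariance of the graph of $\phi$ from Theorem~\ref{main}\eqref{ma2}, and then observing that $\psi\circ\sigma^n\to 0$ a.e.\ while each $\psi\circ\sigma^n$ has the same distribution as $\psi$ forces $\psi=0$ a.e.\ (a clean measure-preservation argument that replaces the external citation). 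Your contradiction is also different: you combine Lemma~\ref{pastphi} (past-measurability of $\phi$) with the future-measurability of $\phi_+\circ P$ and independence of the two tail $\sigma$-algebras to conclude $\phi$ is a.e.\ constant, which clashes with the non-atomic fiber marginal in Proposition~\ref{image_meas}. What your route buys is self-containment (no appeal to the unpublished~\cite{AM}) and a more conceptual endgame that makes explicit why a one-sided attractor cannot exist: it would force $\phi$ to depend on both halves of an independent Bernoulli sequence, hence be trivial. What the paper's route buys is brevity and a cleaner connection to the invariant-measure uniqueness framework already set up in Section~\ref{sec-measures}; note, however, that both paths ultimately rest on the same measure-theoretic core (Theorems~\ref{onemeasure} and~\ref{invmeas}, which underlie Proposition~\ref{image_meas}).
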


\begin{proof}
Assume that such $\phi_+$ exists. Then the graph of $\phi_+\circ
P:\Sigma\to(0,1)$ is an attractor for $F$, because $x_n$ depends only
on $x_0$ and on $\omega_k$ with nonnegative $k$. By a theorem
from~\cite{AM}, $\phi_+\circ P=\phi$ almost everywhere. Thus, the
graph of $\phi_+\circ P$ is $F$-invariant, and it follows that the
graph of $\phi_+$ is $F_+$-invariant.

The measure $(\mu_+)_{\phi_+}$ is then a nontrivial $F_+$-invariant
ergodic measure, so by Theorems~\ref{onemeasure} and~\ref{invmeas} it
is equal to $\mu_+ \times \lambda$, a contradiction.
\end{proof}

In such a way we get an excellent illustration of the \emph{Mystery of
  the Vanishing Attractor}, described in~\cite{AM}. For an invertible
system an attractor exists, but it vanishes when we pass to the
noninvertible system. This happens in spite of the fact that in the
definition of an attractor we only look at forward orbits, and that in
the base the future is completely independent of the past.

One can try to explain this paradox by saying that for $F_+$ also
there is an attractor, but it is the whole space. This is true, but
normally when thinking of an attractor one considers subsets much
smaller than the whole space. Another explanation is that when trying
to find an attractor for $F_+$, which is a graph, we try to specify
one point in $(0,1)$ for each $\uom\in\Sigma_+$, without specifying
$x_0$. However, when we know the past, we basically know $x_0$, and
with the knowledge of $x_0$ and $\uom\in\Sigma_+$ we know $x_n$ for
all $n\ge 0$. Again, this is a kind of explanation (due to M.~ Rams),
but still the question why in order to have a nice description of the
future we need the past, if the past and the future are independent,
remains a little mysterious.

\end{document}